\documentclass[11pt,a4paper]{article}
\usepackage[utf8]{inputenc}
\usepackage{amsmath}
\usepackage{amsfonts}
\usepackage{amssymb}
\usepackage{amsthm}

\addtolength{\hoffset}{-0.5cm}
\addtolength{\textwidth}{1cm}

\usepackage{tikz}
\usepackage[font={small,it}]{caption}

\theoremstyle{plain}
\newtheorem{theorem}{Theorem}
\newtheorem{lemma}[theorem]{Lemma}
\newtheorem{corollary}[theorem]{Corollary}
\newtheorem{proposition}[theorem]{Proposition}

\newtheorem{claim}[theorem]{Claim}

\theoremstyle{definition}
\newtheorem{definition}[theorem]{Definition}
\newtheorem{construction}[theorem]{Construction}

\usepackage{graphicx}

\numberwithin{equation}{section}
\numberwithin{theorem}{section}

\DeclareMathOperator{\Sym}{Sym}
\DeclareMathOperator{\cp}{cp}

\newcommand\cE{\mathcal{E}}
\newcommand\cF{\mathcal{F}}
\newcommand\cB{\mathcal{B}}

\begin{document}

\title{Counting racks of order $n$}
\author{Matthew Ashford and Oliver Riordan%
\thanks{Mathematical Institute, University of Oxford, Andrew Wiles Building, Radcliffe Observatory Quarter, Woodstock Road, Oxford OX2 6GG, UK.
E-mail: {\tt matthew.ashford1@btinternet.com, riordan@maths.ox.ac.uk}.}}
\date{May 20, 2017}
\maketitle

\begin{abstract}
A rack on $[n]$ can be thought of as a set of maps $(f_x)_{x \in [n]}$, where each $f_x$ is a permutation of $[n]$ such that $f_{(x)f_y} = f_y^{-1}f_xf_y$ for all $x$ and $y$. In 2013, Blackburn showed that the number of isomorphism classes of racks on $[n]$ is at least $2^{(1/4 - o(1))n^2}$ and at most $2^{(c + o(1))n^2}$, where $c \approx 1.557$; in this paper we improve the upper bound to $2^{(1/4 + o(1))n^2}$, matching the lower bound. The proof involves considering racks as loopless, edge-coloured directed multigraphs on $[n]$, where we have an edge of colour~$y$ between $x$ and $z$ if and only if $(x)f_y = z$, and applying various combinatorial tools.
\end{abstract}

\section{Introduction}

A \emph{rack} is a pair $(X, \triangleright)$, where $X$ is a non-empty set and $\triangleright:X \times X \to X$ is a binary operation such that:
\begin{enumerate}
\item For any $y, z \in X$, there exists $x \in X$ such that $z = x \triangleright y$;
\item Whenever we have $x,y,z \in X$ such that $x \triangleright y = z \triangleright y$, then $x=z$;
\item For any $x,y,z \in X$, $(x \triangleright y) \triangleright z = (x \triangleright z) \triangleright (y \triangleright z)$.
\end{enumerate}
If $X$ is finite, we call $|X|$ the \emph{order} of the rack. Note that conditions 1 and 2 above are equivalent to the statement that for each $y$, the map $x \mapsto x \triangleright y$ is a bijection on $X$.

As mentioned by Blackburn in \cite{mainpaper}, racks originally developed from correspondence between J.H. Conway and G.C. Wraith in 1959, while more specific structures known as \emph{quandles} (which are racks such that $x \triangleright x = x$ for all $x$) were introduced independently by Joyce \cite{quanhist1} and Matveev \cite{quanhist2} in 1982 as invariants of knots. Fenn and Rourke \cite{history} provide a history of these concepts, while Nelson \cite{widerhist} gives an overview of how these structures relate to other areas of mathematics.

As a first example, note that for any set $X$, if we define $x \triangleright y = x$ for all $x,y \in X$, then we obtain a rack, known as the \emph{trivial rack} $T_X$. If $G$ is a group and $\triangleright:G \times G \to G$ is defined by $x \triangleright y := y^{-1}xy$, then the resulting quandle $(G, \triangleright)$ is known as a \emph{conjugation quandle}. For a further example, let $A$ be an Abelian group and $\tau \in \text{Aut}(A)$ be an automorphism. If we define a binary operation $\triangleright: A \times A \to A$ by $x \triangleright y = (x)\tau + y - (y)\tau = (x-y)\tau + y$ then $(G, \triangleright) = (A, \tau)$ is an \emph{Alexander quandle} or \emph{affine quandle}.\footnote{Throughout the paper, we write maps on the right.}

Let $(X, \triangleright)$ and $(X', \triangleright')$ be racks. A map $\phi:X \to X'$ is a \emph{rack homomorphism} if $(x \triangleright y)\phi = (x)\phi \triangleright' (y)\phi$ for all $x,y \in X$. A bijective homomorphism is an \emph{isomorphism}.

We will only be concerned with racks up to isomorphism. If $(X, \triangleright)$ is a rack of order $n$, then it is clearly isomorphic to a rack on $[n]$, so we will take $[n]$ to be our underlying ground set. We will denote the set of all racks on $[n]$ by $\mathcal{R}_n$, and the set of all isomorphism classes of racks of order $n$ by $\mathcal{R}'_n$, so $|\mathcal{R}'_n| \leqslant |\mathcal{R}_n|$.

There have been several published results concerning the enumeration of quandles of small order; Ho and Nelson \cite{algo1} and Henderson, Macedo and Nelson \cite{algo2} enumerated the isomorphism classes of quandles of order at most 8, while work of Clauwens \cite{<35no1} and Vendramin \cite{<35no2} gives an enumeration of isomorphism classes of quandles of order at most 35 whose operator group is transitive (the operator group is defined in Section \ref{graphsec}). Recently, Jedli{\v{c}}ka, Pilitowska, Stanovsk{\`y} and Zamojska-Dzienio \cite{medial} gave an enumeration of medial quandles (a type of affine quandle) of order at most 13. As far as we are aware, the only previous asymptotic enumeration result for general racks was due to Blackburn \cite{mainpaper}, giving lower and upper bounds for $|\mathcal{R}'_n|$ of $2^{(1/4 + o(1))n^2}$ and $2^{(c + o(1))n^2}$ respectively, where $c \approx 1.557$. Theorem 8.2 of \cite{medial} improves the upper bound to $2^{(1/2 + o(1))n^2}$ in the case of medial quandles; the authors then conjecture an upper bound of $2^{(1/4 + o(1))n^2}$ under the same restriction. The main result of this paper proves this upper bound for general racks, and hence for (medial) quandles.
\begin{theorem}
\label{mainthm}
Let $\epsilon > 0$. Then for all sufficiently large integers $n$,
\[ 2^{(1/4 - \epsilon)n^2} \leqslant |\mathcal{R}'_n| \leqslant |\mathcal{R}_n| \leqslant 2^{(1/4 + \epsilon)n^2}. \]
\end{theorem}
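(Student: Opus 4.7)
Only the upper bound in Theorem~\ref{mainthm} requires proof, as the matching lower bound is Blackburn's~\cite{mainpaper}. My plan is to work in the permutation formulation and systematically exploit the identity $f_{(x)f_y}=f_y^{-1}f_xf_y$. Iterating yields $f_{(x)g}=g^{-1}f_xg$ for every $g$ in the operator group $G=\langle f_y:y\in[n]\rangle$, so on each $G$-orbit of $[n]$ the function $x\mapsto f_x$ is determined by its value at any single representative. This orbit-based reduction is the first lever for compressing the apparent $\sim n^2\log n$ bits of information in a rack down to the desired $n^2/4$.

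Concretely, let $O_1,\dots,O_k$ be the $G$-orbits, with $|O_i|=n_i$, and fix $x_i\in O_i$. A rack is then specified by (i)~the orbit partition; (ii)~the permutations $f_{x_i}\in S_n$; (iii)~the consistency requirement that, when extended over each orbit by the conjugacy rule, the resulting $(f_x)_{x\in[n]}$ satisfy the axiom globally and generate a group realising the prescribed orbit structure. Since the number of set partitions of $[n]$ is $2^{o(n^2)}$, the partition choice is negligible, and it is enough to bound, for each fixed partition, the number of admissible tuples $(f_{x_i})_{i=1}^k$.

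I would then split by the orbit spectrum. When $k\le n/\log^2 n$, the crude bound $(n!)^k\le 2^{kn\log n}$ is already $2^{o(n^2)}$. When $k$ is large, many indices lie in orbits of bounded size, so their stabilisers $\mathrm{Stab}_G(x)$ are large; specialising the identity at $g\in\mathrm{Stab}_G(x)$ forces $f_x$ to commute with $\mathrm{Stab}_G(x)$, and known bounds on centralisers in $S_n$ drastically limit the permissible $f_x$. Summing over orbit types, this should deliver $2^{(1/4+\epsilon)n^2}$, matching Blackburn's construction, which morally consists of freely choosing an $(n/2)\times(n/2)$ block of the multiplication table $F(x,y)=(x)f_y$.

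The principal obstacle will be the intermediate regime, where orbits have mixed sizes and neither the naive bound nor a centraliser bound alone is sharp enough. Here I would seek a more integrated argument: an entropy inequality applied to $F$ to trade information between index-pairs, or a structural dichotomy showing that any rack far from the Blackburn family has substantially fewer free entries. Obtaining the exact constant $1/4$, rather than some larger absolute constant, is the delicate step and is likely to require a careful interplay between the axiom's consequences for the $G$-orbit structure and the permutation-theoretic bounds outlined above.
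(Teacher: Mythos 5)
Your high-level instinct — that the conjugacy identity means each $f_x$ is pinned down by its value at one representative per orbit, so the job is to bound the freedom in choosing those representative maps — is exactly the strategy of the paper, where the representatives are chosen one per component of a certain auxiliary graph. But there is a genuine gap in the execution, and it is not a minor one: the crude count you propose for each representative map is too weak to reach $1/4$. If the orbits have sizes $n_1,\dots,n_k$, then each $f_{x_i}$ preserves every orbit and so naively has $\prod_j n_j!$ possibilities, giving a total of roughly $2^{k\sum_j n_j\log_2 n_j}$. Optimizing this over partitions of $n$ gives an exponent around $n^2/2$ (achieved when all orbits have size~$2$ or~$3$), so you cannot close the argument by a naive $(n!)^k$ bound in one regime and a centraliser bound in another. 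The essential point that the paper supplies, and you do not, is a mechanism that replaces $n_j!$ by $n_j$: once certain auxiliary maps are revealed, the restriction of any $f_v$ to a component $C$ is determined by the \emph{single} image $(x)f_v$ of one vertex $x\in C$ (this is Lemma~\ref{limitways}). Only after that reduction does the optimization give $n^2/4$ (Lemma~\ref{n2vn}).

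Moreover, there is a circularity in working with orbits of the full operator group $G$. To determine $f_u$ from $f_v$ via conjugacy you need to know the maps indexed by a path from $v$ to $u$, and to determine $f_v|_C$ from a single value you further need to know the maps indexed by a neighbourhood $T^+$ of those indices. If the ``known'' set is all of $[n]$ (as it effectively is when you take $G$-orbits), this is vacuous. The paper escapes by first splitting off the high-degree vertices $S_{>\Delta}$ (handled probabilistically with a small random seed $W$) and then greedily building a very small set $T\subseteq S_{\leqslant\Delta}$, of size $O((\log n)^2)$, so that the auxiliary data $(f_l|_T)_l$, $(f_i)_{i\in T^+}$ and the ``merged component'' structure carry only $2^{o(n^2)}$ bits; the crucial Lemma~\ref{mergebound} controls how much any further map can alter the component structure of $G_T$. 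Your centraliser idea for the many-small-orbits regime is, in spirit, Blackburn's original group-theoretic route, which is precisely what gives the weaker constant $c\approx 1.557$; the improvement to $1/4$ requires the combinatorial small-$T$ revelation scheme and the per-component ``one image suffices'' lemma, neither of which appears in your sketch. As it stands, the proposal correctly identifies the orbit/conjugacy skeleton and the need to optimize over size spectra, but it lacks the counting lemma and the bootstrapping device that actually produce the bound, and you explicitly concede the intermediate regime is open.
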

The lower bound follows from the construction in Theorem 4 of \cite{mainpaper}; our focus is on the upper bound. The bound given in Theorem 12 of \cite{mainpaper} was obtained by applying group theoretic results to the operator group associated with a rack; in our arguments we apply combinatorial results to a graph associated with a rack. This graph is defined in the next section.

\section{Graphical representations of racks}
\label{graphsec}

For any rack $(X, \triangleright)$, we can define a set of bijections $(f_y)_{y \in X}$ by setting $(x)f_y = x \triangleright y$ for all $x$ and $y$. The following well-known result (see for example, \cite{history}, \cite{mainpaper}) gives the correct conditions for a collection of maps $(f_y)_{y \in X}$ to define a rack.
\begin{proposition}
\label{mapdef}
Let $X$ be a set and let $(f_x)_{x \in X}$ be a collection of functions each with domain and co-domain $X$. Define a binary operation $\triangleright:X \times X \to X$ by $x \triangleright y := (x)f_y$. Then $(X, \triangleright)$ is a rack if and only if $f_y$ is a bijection for each $y \in X$, and for all $y,z \in X$ we have
\begin{equation}
\label{maps}
f_{(y)f_z} = f_z^{-1}f_yf_z. 
\end{equation}
\end{proposition}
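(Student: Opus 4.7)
The plan is straightforward: just translate each of the three rack axioms into a statement about the maps $f_y$ and check that they correspond exactly to the stated conditions.

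First I would handle axioms 1 and 2. Axiom 1 says that for any $y,z$ there exists $x$ with $(x)f_y = z$, which is exactly surjectivity of $f_y$. Axiom 2 says $(x)f_y = (z)f_y$ implies $x=z$, which is exactly injectivity of $f_y$. Together, these two axioms hold for all $y$ if and only if every $f_y$ is a bijection on $X$.

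Next I would rewrite axiom 3 in terms of the $f_y$. The left-hand side is
\[
(x \triangleright y) \triangleright z = ((x)f_y)f_z = (x)\bigl(f_y f_z\bigr),
\]
while the right-hand side is
\[
(x \triangleright z) \triangleright (y \triangleright z) = ((x)f_z)f_{(y)f_z} = (x)\bigl(f_z f_{(y)f_z}\bigr).
\]
So axiom 3 says $(x)(f_y f_z) = (x)(f_z f_{(y)f_z})$ for all $x$, which is equivalent to the equality of maps $f_y f_z = f_z f_{(y)f_z}$. Assuming (from axioms 1 and 2) that $f_z$ is a bijection and hence invertible, this is equivalent to $f_{(y)f_z} = f_z^{-1} f_y f_z$, i.e.\ equation (\ref{maps}).

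There is no real obstacle here; the one thing to be slightly careful about is the order of quantifiers in the two directions. For the forward direction one uses axioms 1 and 2 to invert $f_z$ before rearranging. For the reverse direction one assumes $f_y$ is a bijection for each $y$ (giving axioms 1 and 2 directly) and reads the chain of equalities above backwards to recover axiom 3. I would write out both directions briefly but explicitly to make the equivalence fully transparent.
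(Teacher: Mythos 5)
Your proposal is correct and follows the same outline the paper sketches: axioms 1 and 2 translate to each $f_y$ being a bijection, and axiom 3 is a direct rewriting equivalent to \eqref{maps}. The paper omits the computational details for axiom 3 as routine, whereas you spell them out; the content is the same.
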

\begin{proof}
As noted earlier, conditions 1 and 2 in the definition of a rack hold if and only if each $f_y$ is a bijection, so it remains to show that condition 3 is equivalent to \eqref{maps}. This is essentially a reworking of the definition; we omit the simple details.
\end{proof}
In the light of Proposition~\ref{mapdef}, we can just as well define a rack on a set $X$ by the set of maps $(f_y)_{y \in X}$, providing they are all bijections and satisfy \eqref{maps}. We will move freely between the two definitions, with $x \triangleright y = (x)f_y$ for all $x,y \in X$ unless otherwise stated.

The \emph{operator group} of a rack is the subgroup of $\Sym(X)$ generated by $(f_y)_{y \in X}$. The following standard lemma (see for example Lemma 6 of \cite{mainpaper}) shows that Proposition \ref{mapdef} can be extended to elements of the operator group.
\begin{lemma}
\label{opgroup}
Let $(X, \triangleright)$ be a rack and let $G$ be its operator group. Then for any $y \in X$ and $g \in G$, $f_{(y)g} = g^{-1}f_yg$. \qed
\end{lemma}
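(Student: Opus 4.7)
The plan is a straightforward induction on the length of $g$ as a word in the generators $(f_z)_{z \in X}$ and their inverses, using Proposition~\ref{mapdef} as the base case.

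First I would handle the base case $g = f_z$ for some $z \in X$: this is exactly the content of \eqref{maps}, giving $f_{(y)f_z} = f_z^{-1} f_y f_z$. Next I would dispense with the inverse generators $g = f_z^{-1}$: starting from \eqref{maps}, substitute $y' := (y)f_z$, so that $y = (y')f_z^{-1}$, and rearrange to obtain $f_{(y')f_z^{-1}} = f_z f_{y'} f_z^{-1} = (f_z^{-1})^{-1} f_{y'} (f_z^{-1})$. Hence the claim holds for every generator and every inverse of a generator.

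For the inductive step, suppose the identity holds for two elements $g, h \in G$, i.e.\ $f_{(y)g} = g^{-1} f_y g$ and $f_{(y)h} = h^{-1} f_y h$ for all $y \in X$. Then
\[ f_{(y)(gh)} = f_{((y)g)h} = h^{-1} f_{(y)g} h = h^{-1} g^{-1} f_y g h = (gh)^{-1} f_y (gh), \]
so the identity holds for $gh$. Since every element of $G$ is a finite product of $f_z$'s and their inverses, induction on word length completes the proof.

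There is no real obstacle: the only mild subtlety is making sure to establish the identity for the inverse generators $f_z^{-1}$, since a priori \eqref{maps} only provides the case of positive generators. Once that is done, the induction is mechanical.
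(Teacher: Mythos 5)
Your proof is correct. The paper itself gives no proof of this lemma: it states it with a \qed and cites it as standard (Lemma~6 of Blackburn's paper), so there is no argument in the text to compare against. Your induction on word length in the generators $(f_z)_{z\in X}$ and their inverses is the natural argument that such a citation is implicitly deferring to, and you correctly handle the one point that is easy to overlook, namely deriving the conjugation identity for the inverse generators $f_z^{-1}$ from \eqref{maps} by the substitution $y' = (y)f_z$.
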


Any rack on $X$ can be represented by a directed multigraph on $X$; we give each vertex a colour and then put a directed edge of colour $i$ from vertex $j$ to vertex $k$ if and only if $(j)f_i = k$. We then remove all loops from the graph; i.e.\ if $(j)f_i = j$ we don't have an edge of colour $i$ incident to $j$.

It will be helpful to recast the representation of racks by directed multigraphs in a slightly different setting. Let $V$ be a finite set and let $\sigma \in \Sym(V)$; then we can define a simple, loopless directed graph $G_\sigma$ on $V$ by setting $\overrightarrow{uv} \in \overrightarrow{E}(G_\sigma)$ if and only if $u \neq v$ and $(u)\sigma = v$. By considering the decomposition of $\sigma$ into disjoint cycles, we see that $G_\sigma$ consists of a collection of disjoint directed cycles, isolated double edges corresponding to cycles of length two, and some isolated vertices. We can now extend this definition to the case of multiple permutations in a natural way.
\begin{definition}
\label{multinvodef}
Suppose $\Sigma = \lbrace \sigma_1, \ldots, \sigma_k \rbrace \subseteq \Sym(V)$ is a set of permutations on a set $V$. Define a directed, loopless multigraph $G_\Sigma = (V,\overrightarrow{E})$ with a $k$-edge-colouring by putting a directed edge of colour $i$ from $u$ to $v$ if and only if $u \neq v$ and $(u)\sigma_i = v$.

We also define the \emph{reduced} graph $G_\Sigma^0$ to be the directed graph on $V$ obtained by letting $e = \overrightarrow{uv} \in \overrightarrow{E}(G_\Sigma^0)$ if and only if there is at least one directed edge from $u$ to $v$ in $G_\Sigma$. 
\end{definition}
Note that the reduced graph contains at most two edges between any $u,v \in V$, namely $\overrightarrow{uv}$ and $\overrightarrow{vu}$. Also observe that if $\Sigma' \subseteq \Sigma$, then $G_{\Sigma'}$ is a subgraph of $G_\Sigma$.

Before continuing, let us clarify some terminology. A \emph{path} in a directed multigraph $G$ need not respect the orientation of the edges, so for $x,y \in V(G)$, there is an $xy$-path in $G$ if and only if there is an $xy$-path in the underlying undirected graph. A \emph{component} of a directed graph $G$ is defined to be a component of the underlying undirected graph. For $x,y \in V(G)$, a \emph{directed} $xy$-path is a sequence of vertices $x = x_0, \ldots, x_r = y$ such that $\overrightarrow{x_ix_{i+1}} \in \overrightarrow{E}(G)$ for all $i$.

Now let us return specifically to racks.
\begin{definition}
\label{subgdef}
Let $R = (X, \triangleright)$ be a rack, and let $(f_y)_{y \in X}$ be the associated maps. For any $S \subseteq X$, define $\Sigma_S = \lbrace f_y \mid y \in S \rbrace$. Then by $G_S$ we mean the directed multigraph $G_{\Sigma_S}$ in the sense of Definition \ref{multinvodef}; $G_S$ thus has an associated $|S|$-edge-colouring, although if $|S|=1$ we may not necessarily consider $G_S$ as being coloured. We will also write $G_R = G_X$, indicating the graph for the whole rack.
\end{definition}
When describing racks on $[n]$ in a graphical context, we may refer to elements of $[n]$ as vertices. The following two observations are straightforward but crucial.
\begin{lemma}
\label{dirpath}
Let $\Sigma \subseteq \Sym(V)$ be a family of permutations, and let $u, v \in V$ be distinct. Then there is a $uv$-path in $G_\Sigma$ if and only if there is a directed $uv$-path in $G_\Sigma$.
\end{lemma}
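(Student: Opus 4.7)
The ``if'' direction is immediate from the terminology fixed just before the lemma: a directed $uv$-path is, by definition, also a $uv$-path (paths do not need to respect orientation). So the content is entirely in the ``only if'' direction, which I plan to handle by replacing each ``wrong-way'' edge of a given $uv$-path with a directed detour around a cycle of the relevant permutation.

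In detail, suppose $u = x_0, x_1, \ldots, x_r = v$ is a $uv$-path in $G_\Sigma$. For each $i$, pick a colour $c_i$ realising the edge between $x_i$ and $x_{i+1}$, so that either $(x_i)\sigma_{c_i} = x_{i+1}$ or $(x_{i+1})\sigma_{c_i} = x_i$. In the first case the edge $\overrightarrow{x_ix_{i+1}}$ is already correctly oriented and we keep it. In the second case, $x_i$ and $x_{i+1}$ lie in a common cycle $C$ of $\sigma_{c_i}$, and since $\sigma_{c_i}$ is a bijection, $x_{i+1}$ is the unique predecessor of $x_i$ in $C$. Hence starting at $x_i$ and repeatedly applying $\sigma_{c_i}$, we traverse $C$ and eventually reach $x_{i+1}$ just before returning to $x_i$; the intermediate steps give a directed path $x_i \to (x_i)\sigma_{c_i} \to \cdots \to x_{i+1}$ in $G_{\sigma_{c_i}} \subseteq G_\Sigma$, all of whose edges have colour $c_i$. (If $C$ has length two this degenerates to the single directed edge $\overrightarrow{x_ix_{i+1}}$, which is present in $G_\Sigma$ alongside $\overrightarrow{x_{i+1}x_i}$.) Concatenating the kept edges with these detours yields a directed walk from $u$ to $v$ in $G_\Sigma$, and any directed walk contains a directed path between its endpoints.

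The argument is essentially bookkeeping about cycle structure, so I do not expect a genuine obstacle; the only corner to watch is whether the cycle $C$ could have length one, but that is excluded because an edge of colour $c_i$ is incident to $x_i$, which forces $x_i$ to be a non-fixed point of $\sigma_{c_i}$. Everything else follows from the fact that each permutation $\sigma_{c_i}$ decomposes $G_{\sigma_{c_i}}$ into disjoint directed cycles, so moving between two vertices of the same cycle can always be done in the forward direction.
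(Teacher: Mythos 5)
Your proposal is correct and takes essentially the same approach as the paper: both replace each wrong-way edge of the given $uv$-path with a directed detour around the corresponding cycle of the relevant permutation, obtaining a directed walk, and then extract a directed path from it. Your write-up is simply a more detailed version of the paper's two-line argument, including the (correct) observation that the cycle in question must have length at least two.
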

\begin{proof}
We need only prove the `only if' statement. Let $u = u_0, \ldots, u_t = v$ be a path in $G_\Sigma$; any edge $\overrightarrow{u_iu_{i-1}}$ is part of a directed cycle and thus can be replaced with a directed $u_{i-1}u_i$-path. Replacing each such edge gives a directed $uv$-walk; the shortest such walk is a path.
\end{proof}

\begin{lemma}
\label{comporb}
Let $\Sigma \subseteq \Sym(V)$ be a family of permutations and $U \subseteq V$. Then $U$ is an orbit of the natural action of $\langle \Sigma \rangle$ on $V$ if and only if $U$ spans a component in $G_\Sigma$.
\end{lemma}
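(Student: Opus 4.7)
The plan is a straightforward double implication, using Lemma \ref{dirpath} to pass freely between directed paths and unoriented paths in $G_\Sigma$.

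For the forward direction, suppose $U$ is an orbit of $\langle \Sigma \rangle$ on $V$. I want to show that $U$ is exactly the vertex set of some component. First, any two vertices $u,v \in U$ lie in the same component: writing $v = (u)g$ with $g = \sigma_{i_1}^{\epsilon_1}\sigma_{i_2}^{\epsilon_2}\cdots\sigma_{i_k}^{\epsilon_k}$, the sequence of intermediate images $u, (u)\sigma_{i_1}^{\epsilon_1}, (u)\sigma_{i_1}^{\epsilon_1}\sigma_{i_2}^{\epsilon_2}, \ldots, v$ is a walk in the underlying undirected graph $G_\Sigma$, since consecutive terms either differ by a $\sigma_{i_j}$ (giving an edge of colour $i_j$) or are equal (if the corresponding factor fixes the current vertex). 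Conversely, no edge of $G_\Sigma$ leaves $U$: any edge is of the form $\overrightarrow{uw}$ with $w = (u)\sigma_i$, so if $u \in U$ then $w$ lies in the same $\langle \Sigma \rangle$-orbit as $u$, i.e.\ $w \in U$. Thus $U$ is both internally connected and separated from $V \setminus U$ by no edges, so $U$ spans a component of $G_\Sigma$.

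For the reverse direction, suppose $U$ spans a component of $G_\Sigma$. Pick any $u \in U$ and let $O$ be its $\langle \Sigma \rangle$-orbit. By the forward direction (already established), $O$ spans a component of $G_\Sigma$. Since $u \in U \cap O$, and the components of $G_\Sigma$ partition $V$, we must have $U = O$, so $U$ is an orbit.

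There is no real obstacle beyond keeping the definitions straight; the only subtlety is allowing inverses $\sigma_i^{-1}$ in the group-element expansion and observing that this is exactly what Lemma \ref{dirpath} lets us ignore when we only need connectivity in the undirected sense.
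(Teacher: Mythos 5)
Your proof is correct and follows essentially the same route as the paper's: both proofs rest on the observation that a word $\sigma_{i_1}^{\epsilon_1}\cdots\sigma_{i_k}^{\epsilon_k}$ applied to $u$ traces out a walk in the underlying undirected $G_\Sigma$ (with $\epsilon_j=-1$ traversing the colour-$i_j$ edge backwards), the paper phrasing it as one iff chain while you split it into a forward direction plus a partition argument for the converse. One tiny imprecision: when you argue ``no edge leaves $U$'', you show that an edge $\overrightarrow{uw}$ with tail $u \in U$ has head $w\in U$, but you should also note the symmetric case (head in $U$, tail outside), which follows because $u=(w)\sigma_i^{-1}$ and $\sigma_i^{-1}\in\langle\Sigma\rangle$ -- a point you gesture at in your closing remark but don't quite spell out in the argument itself.
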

\begin{proof}
Let $u,v \in V$. Then $u$ and $v$ are in the same orbit of the natural action if and only if there exists a sequence $(\sigma_{i_1}, \ldots, \sigma_{i_m})$ of elements from $\Sigma$ and a sequence $(\epsilon_1, \ldots, \epsilon_m) \in \lbrace -1,1\rbrace^m$ such that $v = (u)\sigma_{i_1}^{\epsilon_1} \cdots \sigma_{i_m}^{\epsilon_m}$. But this is exactly equivalent to there being a $uv$-path in $G_\Sigma$ with edges successively coloured $i_1, \ldots, i_m$, and the value of $\epsilon_i$ indicating the direction of the edge. Thus the partition of $V$ into orbits of $\langle \Sigma \rangle$ coincides with the partition of $V$ into components of $G_\Sigma$.
\end{proof}
Applying this last result to a rack $R$ on $[n]$ shows that the orbits of the operator group (in its natural action on $[n]$) coincide with the components of $G_R$.

We can illustrate these notions with a simple example. Let $(X, \triangleright)$ be a rack; then a \emph{subrack} of $(X, \triangleright)$ is a rack $(Y, \triangleright|_{Y \times Y})$\footnote{The notation $\triangleright|_{Y \times Y}$ in the above context will always be abbreviated to $\triangleright$, with the restriction to the subset $Y$ left implicit.} where $Y \subseteq X$. Thus a subset $Y \subseteq X$ forms a subrack if and only if for all $y,z \in Y$, $(z)f_y \in Y$; as each $f_y$ is a bijection we then also have that $(z)f_y^{-1} \in Y$ for all $z$ and thus $Y$ and $X \setminus Y$ are separated in the graph $G_Y$. 

\section{Outline of the proof}

In this short section we give a brief outline of how we will count the number of racks on $[n]$. We shall reveal information about an unknown rack on $[n]$ in several steps, counting the number of possibilities for the revealed information at each step. At the end the rack will have been determined completely, so we obtain an upper bound on the number of racks.

The principle behind the argument is as follows: we choose a set $T \subseteq [n]$ and reveal the maps $(f_j)_{j \in T}$. We then consider the components of the graph $G_T$; a key lemma shows that if $V$ is a set of vertices such that each component contains precisely one element from $V$, then revealing the maps $(f_v)_{v \in V}$ determines the entire rack $R$. The difficulty is in finding a set $T$ which is not too big, but such that $G_T$ has relatively few components.

We will actually need to consider two different sets $T$ and $W$. We choose a threshold $\Delta$ and first consider the set of vertices $S_{> \Delta}$ that have degree strictly greater than $\Delta$ in the underlying graph $G_R^0$; we will choose probabilistically a relatively small set $W \subseteq [n]$ such that any vertex with high degree in $G_R^0$ also has high degree in $G_W^0$. Because the degree of any vertex in $S_{> \Delta}$ is so high, the number of components of $G_W^0$ contained in $S_{> \Delta}$ is small; this allows us to determine the maps $(f_s)_{s \in S_{> \Delta}}$.

For the vertices in $S_{\leqslant \Delta}$ (those with degree at most $\Delta$ in $G_R^0$), we will construct greedily a set $T$ of a given size by adding vertices one at a time and revealing their corresponding maps, each time choosing the vertex whose map joins up the most components. It will follow that for every $j \in S_{\leqslant \Delta} \setminus T$, $f_j$ can only join up a limited number of components of $G_T$; we will reveal the restriction of $f_j$ to these components.

Because of the complex nature of this argument, we will `store' these revealed maps in a 7-tuple $I = \mathcal{I}(R)$, and then count the racks consistent with $I$. The main term in the argument comes from considering maps in $S_{\leqslant \Delta} \setminus T$ acting within components of $G_T$; we can control the action of these maps by first revealing some extra information corresponding to the neighbours of $T$ in $G_R^0$, which can themselves be controlled as $T$ consists of low degree vertices.

In Section \ref{infosec} we formally define the information $\mathcal{I}(R)$, which requires some straightforward graph theory; in Section \ref{detsec} we show that the number of possibilities for $I$ is at most $2^{o(n^2)}$. In Section \ref{ressec} we will complete the proof of Theorem \ref{mainthm}.

\section{Important information in a rack}
\label{infosec}

\subsection{Degrees in graphical representations of racks}

Let $R = ([n], \triangleright)$ be a rack and $T \subseteq [n]$; for each $v \in [n]$, define
\[ \Gamma_T^+(v) = \lbrace (v)f_j \mid j \in T, \; (v)f_j \neq v \rbrace, \]
so $\Gamma_T^+(v)$ is the set of vertices $w$ such that $\overrightarrow{vw} \in \overrightarrow{E}(G_T)$. If $V \subseteq [n]$, we define $\Gamma_T^+(V) := \bigcup_{v \in V} \Gamma_T^+(v)$.
\begin{definition}
\label{outdegdef}
With notation as above, the \emph{out-degree} of $v$ (with respect to $T$) is $d^+_T(v) = |\Gamma_T^+(v)|$, so $d_T^+(v)$ is the out-degree of $v$ in the simple graph $G_T^0$. 
\end{definition}
We can of course define the in-degree $d_T^-(v)$ similarly. We now show that when $S$ is a subrack, then all components of $G_S^0$ are out-regular.
\begin{lemma}
\label{reg}
Let $([n], \triangleright)$ be a rack and $(S, \triangleright)$ be a subrack, and let $C$ span a component of $G_S$, and hence also of $G_S^0$. Then for any $u,v \in C$, $d_S^+(u) = d_S^+(v)$.
\end{lemma}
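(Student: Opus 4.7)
The plan is to reduce to the adjacent case and then apply the rack axiom. Since $C$ is connected in $G_S$ (and so also in $G_S^0$), any two vertices of $C$ are joined by an undirected path, so it is enough to show that $d_S^+(u) = d_S^+(v)$ whenever $u$ and $v$ are adjacent in $G_S^0$; iterating along the path then yields the result for all $u, v \in C$.

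For the base case, suppose without loss of generality that there is some $k \in S$ and $u \neq v$ with $(u)f_k = v$ (the case $(v)f_k = u$ is symmetric). By Lemma \ref{opgroup} (or directly from \eqref{maps}), for every $j \in S$ we have $f_{(j)f_k} = f_k^{-1}f_jf_k$. Because $(S, \triangleright)$ is a subrack, $f_k$ restricts to a bijection of $S$, so $j \mapsto (j)f_k$ is a permutation of $S$, and consequently
\[ \{f_{j'} \;:\; j' \in S\} \;=\; \{f_k^{-1}f_jf_k \;:\; j \in S\}. \]
Evaluating both sides at $v$ and using $(v)f_k^{-1} = u$ gives
\[ \{(v)f_{j'} : j' \in S\} \;=\; \{(u)f_jf_k : j \in S\} \;=\; (A)f_k, \]
where $A := \{(u)f_j : j \in S\}$.

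To finish, note that $d_S^+(u) = |A \setminus \{u\}|$ and $d_S^+(v) = |(A)f_k \setminus \{v\}|$. Since $f_k$ is a bijection we have $|A| = |(A)f_k|$, and since $(u)f_k = v$ we have $u \in A$ if and only if $v \in (A)f_k$. Hence both out-degrees equal $|A|$ or $|A| - 1$ in tandem, so $d_S^+(u) = d_S^+(v)$ as required.

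The only subtle point (what I would flag as the main, though minor, obstacle) is correctly handling the potential fixed-point contribution: the set $A$ may or may not contain $u$, and one needs $f_k$ to match this up with whether $v \in (A)f_k$. The relation $(u)f_k = v$ together with bijectivity of $f_k$ exactly ensures this. Everything else is a direct application of the defining identity \eqref{maps} and the fact that $f_k$ preserves $S$.
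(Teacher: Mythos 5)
Your proof is correct. It rests on the same two ingredients as the paper's argument — the conjugation identity $f_{(j)f_k} = f_k^{-1} f_j f_k$ and the fact that a subrack is closed under $f_k$ — but it packages them more efficiently. The paper first shows $(\Gamma_S^+(u))f_i \subseteq \Gamma_S^+(v)$, which gives only the inequality $d_S^+(u) \leqslant d_S^+(v)$ when $\overrightarrow{uv}$ is an edge, and then must invoke Lemma~\ref{dirpath} to obtain directed paths in both directions before concluding equality. You instead track the full image set $A = \{(u)f_j : j \in S\}$, prove $(A)f_k$ equals the corresponding set for $v$, and carefully account for the potential fixed point $u \in A \Leftrightarrow v \in (A)f_k$; this yields equality of out-degrees directly for any adjacent pair, in either orientation, so an ordinary undirected path in $C$ suffices and Lemma~\ref{dirpath} is never needed. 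The fixed-point bookkeeping you flag is exactly the point the paper handles by the short contradiction argument ($(w)f_i = v$ would force $w = u$); your version makes the symmetry explicit, which is arguably cleaner. Both proofs are the same length in essence, so this is a stylistic improvement rather than a structural one, but it does shorten the chain of dependencies.
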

\begin{proof}
First suppose that $v$ is an out-neighbour of $u$, so that there is a directed edge $\overrightarrow{uv} \in \overrightarrow{E}(G_S)$ of some colour $i \in S$, i.e.\ $(u)f_i = v$. Take an arbitrary $w \in \Gamma_S^+(u)$, so $w \neq u$ and there exists a $j \in S$ such that $w = (u)f_j$; as $S$ is a subrack, $k := (j)f_i \in S$. Now observe that
\[ (v)f_k = (v)f_{(j)f_i} = (v)f_i^{-1}f_jf_i = (u)f_jf_i = (w)f_i. \] 
Suppose for a contradiction that $(v)f_k = v$; then $(w)f_i = v = (u)f_i$, and thus $w=u$, contradicting the fact that $w \in \Gamma_S^+(u)$. Hence $(w)f_i = (v)f_k \in \Gamma_T^+(v)$, and as $w \in \Gamma_S^+(u)$ was arbitrary we have that $(\Gamma_S^+(u))f_i \subseteq \Gamma_S^+(v)$. As $f_i$ is a bijection, $d_S^+(u) = |\Gamma_S^+(u)| \leqslant |\Gamma_S^+(v)| = d_S^+(v)$.

Now let $u,v \in C$ be arbitrary; from Lemma \ref{dirpath}, there is a directed path $u = u_0, \ldots, u_r = v$ in $G_S[C]$. From above, we have that $d_S^+(u) \leqslant d_S^+(v)$. By instead considering a directed $vu$-path we have that $d_S^+(v) \leqslant d_S^+(u)$, and the result follows.
\end{proof}

\subsection{Some multigraph theory}
\label{multgss}

The construction of the information $\mathcal{I}(R)$ requires some straightforward graph theory. Here, a \emph{multigraph} $G=(V,E)$ is defined by a vertex set $V = V(G)$ and an edge multiset $E = E(G)$ of unordered pairs from $V$. For multisets $A$ and $B$, $A \uplus B$ is the multiset obtained by including each element $e$ with multiplicity $m$, where $m$ is the sum of the multiplicity of $e$ in $A$ and the multiplicity of $e$ in $B$. If $F$ is a multiset of unordered pairs from $V(G)$, $G + F$ is the multigraph with vertex set $V(G)$ and edge multiset $E(G) \uplus F$. 

In this subsection we will consider only undirected multigraphs for clarity. As paths and components of a directed multigraph are defined by the underlying undirected multigraph, all of these results remain true for directed multigraphs. We will write $\cp(G)$ for the number of components of a multigraph $G$. 

Let $G$ be a multigraph\footnote{While the following results can be formulated using just simple graphs, we will use multigraphs to be consistent with the definition of the graph of a rack.}. Then for distinct $u,v \in V(G)$ we have that $\cp(G + \lbrace uv \rbrace) = \cp(G) - 1$ if and only if there is no $uv$-path in $G$. The following result is standard.
\begin{proposition}
\label{comptower}
Let $G$ be a multigraph and $E_1$ and $E_2$ be multisets of unordered pairs of elements of $V(G)$. Then $\cp(G) - \cp(G + E_2) \geqslant \cp(G + E_1) - \cp(G + E_1 + E_2)$.
\end{proposition}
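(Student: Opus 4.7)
The inequality rearranges to
\[ \cp(G) + \cp(G+E_1+E_2) \;\geq\; \cp(G+E_1) + \cp(G+E_2), \]
a supermodularity statement for the component count: adding a fixed edge multiset reduces the number of components by at least as much when starting from the sparser graph $G$ as when starting from the denser graph $G+E_1$. My plan is a two-stage induction: first dispose of the case $|E_1|=1$ directly, and then bootstrap to general $E_1$ by induction on $|E_1|$.

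For the single-edge case $E_1=\{uv\}$, after a further rearrangement the inequality to prove is
\[ \cp(G) - \cp(G+uv) \;\geq\; \cp(G+E_2) - \cp(G+uv+E_2). \]
Each side lies in $\{0,1\}$, since adding a single edge to a multigraph decreases $\cp$ by at most one, with a decrease of exactly one occurring precisely when the two endpoints lie in different components. The right-hand side equals $1$ iff $u$ and $v$ lie in different components of $G+E_2$; but $G+E_2$ is obtained from $G$ by adding edges, so every component of $G+E_2$ is a union of components of $G$, and hence $u,v$ must also lie in different components of $G$, giving LHS $=1$. Thus LHS $\geq$ RHS.

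For general $E_1$, I would induct on $|E_1|$, writing $E_1 = E_1' \uplus \{e\}$. The inductive hypothesis applied to $(G, E_1', E_2)$ gives
\[ \cp(G) - \cp(G+E_2) \;\geq\; \cp(G+E_1') - \cp(G+E_1'+E_2), \]
while the single-edge case applied to the multigraph $G+E_1'$ with edge $e$ and multiset $E_2$ gives
\[ \cp(G+E_1') - \cp(G+E_1'+E_2) \;\geq\; \cp(G+E_1) - \cp(G+E_1+E_2). \]
Chaining these two inequalities yields the desired bound.

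There is no substantive obstacle here: the whole content of the proposition is the elementary monotonicity fact that adding edges can only merge components together and never split them apart, and the rest is routine bookkeeping.
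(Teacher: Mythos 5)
Your proof is correct and is essentially the same argument as the paper's: the paper inducts on $|E_2|$ with a one-edge base case resolved by the observation that a $uv$-path in $G$ persists in $G+E_1$, whereas you induct on $|E_1|$ with the mirror-image base case, but after rewriting the claim in the symmetric form $\cp(G)+\cp(G+E_1+E_2)\geqslant\cp(G+E_1)+\cp(G+E_2)$ the two are interchangeable.
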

\begin{proof}
The case where $|E_2| = 1$ follows from the above observation, since there is a $uv$-path in $G + E_1$ if there is a $uv$-path in $G$. The general case now follows by induction.
\end{proof}

\begin{definition}
Let $G$ be a multigraph and $E$ be a multiset of unordered pairs of elements of $V(G)$. Let $C \subseteq V(G)$ span a component\footnote{In other words, $C$ is the vertex set of a component of $G$; the component itself is a multigraph, not just a set of vertices.} of $G$; we say that $C$ is \emph{merged by $E$} if there exist $u \in C$, $v \in V(G) \setminus C$ such that $uv$ is an edge from $E$. We denote by $M(G,E)$ the set of (vertex sets of) components of $G$ merged by $E$.
\end{definition}
Note that for multisets of edges $E$ and $F$, $M(G, E \uplus F) = M(G, E) \cup M(G,F)$. As a single edge can merge at most two components, $|M(G, \lbrace e \rbrace)| \leqslant 2$ for any unordered pair~$e$.
\begin{lemma}
\label{Jlem}
Let $G$ be a multigraph and $E$ and $\lbrace e \rbrace$ be multisets of unordered pairs of elements of $V(G)$. If $\cp(G + E + \lbrace e \rbrace) = \cp(G + E)$, then $M(G,E \uplus  \lbrace e \rbrace) = M(G,E)$.
\end{lemma}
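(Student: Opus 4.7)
The plan is to exploit the identity $M(G, E \uplus \{e\}) = M(G, E) \cup M(G, \{e\})$ noted in the paragraph just before the statement. This immediately reduces the lemma to showing $M(G, \{e\}) \subseteq M(G, E)$ under the given hypothesis. Writing $e = uv$, note that $M(G, \{e\}) = \emptyset$ whenever $e$ is a loop or has both endpoints in the same component of $G$, in which case the inclusion is trivial. So I would assume $u$ and $v$ lie in distinct components $C_u, C_v$ of $G$, making $M(G, \{e\}) = \{C_u, C_v\}$, and reduce to showing that both $C_u$ and $C_v$ belong to $M(G, E)$.

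The hypothesis $\cp(G + E + \{e\}) = \cp(G + E)$ is equivalent to saying that $u$ and $v$ already lie in the same component of $G + E$; hence there is a $uv$-walk in $G + E$ using edges from $E(G) \uplus E$. The key step is to extract from this walk the desired witnesses. Traversing the walk from $u \in C_u$, let $e_1$ be the first edge that crosses out of $C_u$; such an edge exists because the walk ends at $v \notin C_u$. Since no edge of $G$ can join $C_u$ to a vertex outside $C_u$ (components of $G$ are separated in $G$), the edge $e_1$ must come from $E$, and it has one endpoint in $C_u$ and one outside, witnessing $C_u \in M(G, E)$. A symmetric argument applied to the last edge of the walk entering $C_v$ shows $C_v \in M(G, E)$.

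The argument is essentially a single observation and I do not anticipate a real obstacle; the only subtleties are handling the degenerate cases (loops, or both endpoints of $e$ already in the same component of $G$), and taking care to locate actual crossing edges along the walk rather than just invoking connectivity abstractly. Once $M(G, \{e\}) \subseteq M(G, E)$ has been established, the conclusion $M(G, E \uplus \{e\}) = M(G, E)$ follows immediately from the union identity.
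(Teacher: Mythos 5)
Your proof is correct and follows essentially the same route as the paper's: reduce via $M(G, E \uplus \{e\}) = M(G,E) \cup M(G,\{e\})$ to showing $M(G,\{e\}) \subseteq M(G,E)$, dispose of the degenerate case, and use the hypothesis to place $u$ and $v$ in a single component of $G+E$. The only difference is that you spell out the final step (tracing the $uv$-walk to find an $E$-edge leaving each of $C_u$ and $C_v$), which the paper compresses to ``it follows easily that $C, D \in M(G,E)$.''
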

\begin{proof}
Write $e = uv$; if $u$ and $v$ are in the same component of $G$ then $e$ is not a merging edge and $M(G, \lbrace e \rbrace) = \emptyset$, so suppose that $u$ and $v$ are in different components of $G$. Write $C$ for the vertex set of the component containing $u$ and $D$ for that containing $v$, so that $M(G, \lbrace e \rbrace) = \lbrace C,D \rbrace$.

As $\cp(G + E + \{e\}) = \cp(G + E)$ we have that $C$ and $D$ are both contained a single component of $G + E$; it follows easily that $C,D \in M(G,E)$. It follows that in all cases we have $M(G, \lbrace e \rbrace) \subseteq M(G,E)$ and thus that $M(G, E \uplus \lbrace e \rbrace) = M(G,E)$.
\end{proof}
\begin{corollary}
\label{Jbound}
Let $G$ be a multigraph and $E$ be a multiset of unordered pairs of elements of $V(G)$. Then $|M(G,E)| \leqslant 2(\cp(G) - \cp(G + E))$.
\end{corollary}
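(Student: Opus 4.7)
The plan is to induct on $|E|$, peeling off one edge at a time, and to split into two cases according to whether that edge reduces the component count. The base case $|E|=0$ is trivial since $M(G,\emptyset) = \emptyset$ and $\cp(G)-\cp(G)=0$.

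For the inductive step, write $E = E' \uplus \lbrace e \rbrace$ with $|E'|=|E|-1$, and apply the inductive hypothesis to get $|M(G,E')| \leqslant 2(\cp(G) - \cp(G+E'))$. The key identity I would use is $M(G, E) = M(G, E') \cup M(G, \lbrace e \rbrace)$, which follows immediately from the remark in the text that $M$ is additive with respect to $\uplus$ (and here $M(G,\{e\})$ has size at most $2$).

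Now split into two cases. If $e$ does not reduce the number of components, i.e.\ $\cp(G + E' + \lbrace e \rbrace) = \cp(G+E')$, then Lemma~\ref{Jlem} (applied with this $E'$ and $e$) gives $M(G, E' \uplus \lbrace e \rbrace) = M(G, E')$, so $|M(G,E)| = |M(G,E')| \leqslant 2(\cp(G)-\cp(G+E')) = 2(\cp(G)-\cp(G+E))$. Otherwise, $\cp(G+E) = \cp(G+E')-1$, and I just use the trivial bound $|M(G,\lbrace e\rbrace)|\leqslant 2$ together with the inductive hypothesis to get
\[ |M(G,E)| \leqslant |M(G,E')| + 2 \leqslant 2(\cp(G) - \cp(G+E')) + 2 = 2(\cp(G) - \cp(G+E)). \]

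There is no real obstacle here: the whole content is packaged into Lemma~\ref{Jlem}, which ensures that ``useless'' edges (those that do not drop the component count when added last) contribute nothing new to $M$. The only thing to be slightly careful about is that the order in which one peels edges from the multiset is immaterial, which is fine because the statement only involves $\cp(G+E)$ and $M(G,E)$, both of which depend on $E$ as a multiset, not on an ordering.
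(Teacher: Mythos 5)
Your proof is correct, and it is essentially the same argument as the paper's: both add one edge at a time and invoke Lemma~\ref{Jlem} to show that edges not decreasing the component count contribute nothing new to $M$. The only difference is presentational — you phrase it as a clean induction on $|E|$ that peels off the last edge, whereas the paper fixes an ordering $e_1,\ldots,e_l$ up front and sums the contributions over all $k$; the two are equivalent.
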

\begin{proof}
Order $E$ as $\lbrace e_1, \ldots, e_l \rbrace$ and write $a = \cp(G) - \cp(G + E)$; then there are precisely $a$ edges $e_{i_1}, \ldots, e_{i_a}$ such that $\cp(G + \lbrace e_1, \ldots, e_{i_j} \rbrace) = \cp(G + \lbrace e_1, \ldots, e_{i_j - 1} \rbrace) - 1$ for each $j$. Write $E_k = \lbrace e_1, \ldots, e_k \rbrace$ for each $k$, so we always have $M(G, E_k) = M(G, E_{k-1} \uplus \lbrace e_k \rbrace) = M(G, E_{k-1}) \cup M(G, \lbrace e_k \rbrace)$. Now consider adding the edges of $E$ in the order given to $G$; if $k \neq i_j$ for any $j$ then $\cp(G, E_k) = \cp(G, E_{k-1})$ and so from Lemma \ref{Jlem} we have that $M(G, E_k) = M(G, E_{k-1})$, while if $k = i_j$ for some $j$ we have that $|M(G, E_k)| \leqslant |M(G, E_{k-1})| + 2$. As there are only $a$ such edges it follows that $|M(G,E)| \leqslant 2a = 2(\cp(G) - \cp(G + E))$.
\end{proof}

\subsection{The information $\mathcal{I}(R)$}

We introduce the following terminology. For any rack $R = ([n], \triangleright)$ and any $\Delta$ with $1 \leqslant \Delta \leqslant n-1$, let 
\[ S_{\leqslant \Delta}(R) := \lbrace v \in [n] \mid d_R^+(v) \leqslant \Delta \rbrace \]
denote the set of all vertices with out-degree in $G_R^0$ at most $\Delta$. Write $S_{> \Delta}(R) = [n] \setminus S_{\leqslant \Delta}(R)$ for the set of all vertices with out-degree strictly greater than $\Delta$. We now show that this partition is actually a partition into subracks. 
\begin{lemma}
\label{hilosplit}
Let $R = ([n], \triangleright)$ be a rack and $1 \leqslant \Delta \leqslant n-1$. Then $(S_{>\Delta}(R), \triangleright)$ and $(S_{\leqslant \Delta}(R), \triangleright)$ are both subracks of $R$.
\end{lemma}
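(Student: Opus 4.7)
The plan is to exploit the fact that each right-multiplication map $f_y$ is itself a rack automorphism, and hence induces an automorphism of the simple directed graph $G_R^0$. From this we get that out-degrees are preserved under the action of $f_y$, i.e.\ $d_R^+((z)f_y) = d_R^+(z)$, and the subrack condition follows immediately.

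In more detail, first I would verify that for each $y \in [n]$, the bijection $f_y : [n] \to [n]$ is a rack automorphism of $R$. This is a direct consequence of the rack axiom $f_{(x)f_y} = f_y^{-1} f_x f_y$: for any $u, v \in [n]$,
\[
  ((u)f_v)f_y = (u) f_v f_y = (u) f_y \bigl(f_y^{-1} f_v f_y\bigr) = ((u)f_y)\, f_{(v)f_y},
\]
which is exactly the statement $(u \triangleright v) f_y = (u)f_y \triangleright (v)f_y$. Since a rack automorphism is in particular a graph automorphism of $G_R^0$ (it takes directed edges to directed edges while preserving vertices), it preserves out-degrees: $d_R^+((z)f_y) = d_R^+(z)$ for all $y, z \in [n]$.

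Given this degree-preservation, the conclusion is immediate. If $y, z \in S_{\leqslant \Delta}(R)$, then $d_R^+((z)f_y) = d_R^+(z) \leqslant \Delta$, so $(z)f_y = z \triangleright y \in S_{\leqslant \Delta}(R)$; by the characterisation of subracks (a subset $Y$ is a subrack iff $(z)f_y \in Y$ for all $y,z \in Y$) this makes $S_{\leqslant \Delta}(R)$ a subrack. The argument for $S_{>\Delta}(R)$ is identical.

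I do not expect any significant obstacle here; this is essentially an unpacking of the rack axiom. The only point that requires a moment's care is verifying the automorphism property $(u \triangleright v) f_y = (u)f_y \triangleright (v)f_y$ above — one has to keep straight the distinction between the index $v$ and its image $(v)f_y$ when applying Proposition~\ref{mapdef} (or equivalently Lemma~\ref{opgroup} with $g = f_y$). Once that is written out, the rest is automatic.
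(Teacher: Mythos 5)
Your proof is correct, and takes a genuinely different route from the paper's. The paper first establishes a separate structural lemma (Lemma~\ref{reg}): in any subrack $S$, out-degree in $G_S^0$ is constant on each component of $G_S$. Its proof is a hands-on shifting argument: if $(u)f_i = v$ with $i \in S$, then $f_i$ maps $\Gamma_S^+(u)$ injectively into $\Gamma_S^+(v)$. Applying that lemma with $S = R$ shows $S_{>\Delta}(R)$ and $S_{\leqslant\Delta}(R)$ are unions of components of $G_R$, hence separated, hence both subracks. You instead invoke directly the standard fact that each inner translation $f_y$ is a rack automorphism --- an immediate consequence of $f_{(v)f_y} = f_y^{-1}f_vf_y$ --- and therefore a graph automorphism of $G_R^0$, so that $d_R^+((z)f_y) = d_R^+(z)$. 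This is cleaner and more conceptual; and since the identity holds for \emph{all} $y \in [n]$, not just $y \in S_{\leqslant\Delta}(R)$, it gives directly that both sets are $[n]$-invariant in the paper's later terminology, which is exactly the same conclusion the paper's argument reaches (being a union of components of $G_R$ is precisely $[n]$-invariance). The difference is one of packaging: the paper isolates out-regularity of components as a lemma in its own right, whereas you short-circuit to the automorphism observation that ultimately underlies Lemma~\ref{reg}'s proof.
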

\begin{proof}
By Lemma \ref{reg} (with $S = R$), two vertices in the same component of $G_R$ have the same out-degree. Hence $S_{> \Delta}(R)$ and $S_{\leqslant \Delta}(R)$ are separated in $G_R$ and thus both $(S_{>\Delta}(R), \triangleright)$ and $(S_{\leqslant \Delta}(R), \triangleright)$ are subracks.
\end{proof}
Now fix 
\[ \Delta := (\log_2 n)^3, \]
so $\Delta \leqslant n-1$ for sufficiently large $n$. Given a rack $R$, we will construct a set $T(R) \subseteq S_{\leqslant \Delta}(R)$, with $|T(R)| \leqslant (\log_2 n)^2$, by the following procedure (the subgraph $G_A$, where $A \subseteq [n]$, is described in Definitions \ref{multinvodef} and \ref{subgdef}).
\begin{construction}
\label{Tconst} 
If $S_{\leqslant \Delta}(R) = \emptyset$ then $T(R) = \emptyset$. Otherwise, order the vertices of $S_{\leqslant \Delta}(R)$ as follows: first choose $u_1$ so that $\cp(G_{ \lbrace u_1 \rbrace}) \leqslant \cp(G_{\lbrace v \rbrace})$ for any $v \in S_{\leqslant \Delta}(R)$. Given a partial ordering $u_1, \ldots, u_k$, choose the next vertex $u_{k+1}$ such that $\cp(G_{\lbrace u_1, \ldots, u_k, u_{k+1} \rbrace}) \leqslant \cp(G_{ \lbrace u_1 \ldots, u_k, v \rbrace})$ for any $v \in S_{\leqslant \Delta}(R) \setminus \lbrace u_1, \ldots, u_k \rbrace$. Now take 
\[ L := \lfloor (\log_2 n)^2 \rfloor \]
and define 
\begin{equation*}
T(R) := 
\begin{cases} 
\lbrace u_1, \ldots, u_L \rbrace &\text{if } |S_{\leqslant \Delta}(R)| \geqslant L \\
S_{\leqslant \Delta}(R) & \text{otherwise.} 
\end{cases}
\end{equation*}
\end{construction}
We now introduce some more notation. For any $j \in [n]$, write $\overrightarrow{E}_j = \overrightarrow{E}(G_{ \lbrace j \rbrace})$ for the set of edges of $G_R$ of colour $j$ and
\[
 M_j := M(G_{T(R)}, \overrightarrow{E}_j)
\]
for the set of (vertex sets of) components of $G_{T(R)}$ merged by the edges of colour $j$. Note that if $j \in T(R)$ then $\overrightarrow{E}_j \subseteq \overrightarrow{E}(G_{T(R)})$ and so $M_j = \emptyset$. 

The key property of the set $T(R)$ is given in the next lemma.
\begin{lemma}
\label{mergebound}
Let $R = ([n], \triangleright)$ be a rack. Then for any $j \in S_{\leqslant \Delta}(R) \setminus T(R)$, 
\[ |M_j| \leqslant \frac{2n}{(\log_2 n)^2}. \]
\end{lemma}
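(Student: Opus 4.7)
The plan is to exploit the greedy construction of $T(R)$ together with the subadditivity results of Section~\ref{multgss}. Let me write $T_k = \{u_1,\ldots,u_k\}$ for $0 \leqslant k \leqslant L$, so $T_0 = \emptyset$ and $T_L = T(R)$. First observe that if $j \in S_{\leqslant \Delta}(R) \setminus T(R)$ then we must have $|T(R)| = L$ (otherwise $T(R) = S_{\leqslant \Delta}(R)$ and the set in question is empty), so $j$ was a valid candidate at every step of Construction~\ref{Tconst}.

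Starting from the conclusion and working backwards, by Corollary~\ref{Jbound} applied to $G = G_{T_L}$ and $E = \overrightarrow{E}_j$, it is enough to show that
\[
\cp(G_{T_L}) - \cp(G_{T_L \cup \{j\}}) \leqslant \frac{n}{L}.
\]
To get this, I would run a telescoping argument over the greedy steps. At step $k+1$, since $j$ was a candidate, the greedy choice gives
\[
\cp(G_{T_k}) - \cp(G_{T_{k+1}}) \geqslant \cp(G_{T_k}) - \cp(G_{T_k \cup \{j\}}).
\]
Then I would apply Proposition~\ref{comptower} with $G = G_{T_k}$, $E_1 = \overrightarrow{E}_{u_{k+1}} \uplus \cdots \uplus \overrightarrow{E}_{u_L}$ (so $G + E_1 = G_{T_L}$) and $E_2 = \overrightarrow{E}_j$, which yields
\[
\cp(G_{T_k}) - \cp(G_{T_k \cup \{j\}}) \geqslant \cp(G_{T_L}) - \cp(G_{T_L \cup \{j\}}).
\]
Chaining these two inequalities shows that each of the $L$ greedy steps reduces the component count by at least $\cp(G_{T_L}) - \cp(G_{T_L \cup \{j\}})$.

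Summing over $k = 0, \ldots, L-1$ and using $\cp(G_{T_0}) = \cp(G_\emptyset) = n$,
\[
n \geqslant \cp(G_{T_0}) - \cp(G_{T_L}) \geqslant L \bigl(\cp(G_{T_L}) - \cp(G_{T_L \cup \{j\}})\bigr),
\]
so $\cp(G_{T_L}) - \cp(G_{T_L \cup \{j\}}) \leqslant n/L$, and Corollary~\ref{Jbound} then gives $|M_j| \leqslant 2n/L \leqslant 2n/(\log_2 n)^2$ for $n$ large enough (noting $L = \lfloor (\log_2 n)^2 \rfloor$).

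The main conceptual step is the combination of the greedy optimality (which compares step $k$ with adding $j$ at step $k$) with Proposition~\ref{comptower} (which then compares adding $j$ at step $k$ with adding $j$ at the end). The rest is bookkeeping. I expect no serious obstacle, since everything needed has already been set up in the preceding subsection; the only thing to be careful about is the base case, namely that $j$ really is a legal candidate at every step, which is why the initial observation about $|T(R)| = L$ is important.
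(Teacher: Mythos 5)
Your argument follows essentially the same route as the paper's: both exploit the greedy construction of $T(R)$ together with Proposition~\ref{comptower} to show that adding $\overrightarrow{E}_j$ after $\overrightarrow{E}_{u_1},\ldots,\overrightarrow{E}_{u_L}$ merges few components, and both then invoke Corollary~\ref{Jbound}. (The paper packages the comparison slightly differently, first establishing that the differences $x_i=\cp(H_{i-1})-\cp(H_i)$ form a decreasing sequence and then reading off a bound on $x_{L+1}$, but the underlying mechanism is identical.)

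However, there is an off-by-one error in your final step. You sum over the $L$ steps $k=0,\ldots,L-1$ and obtain $\cp(G_{T_L})-\cp(G_{T_L\cup\{j\}})\leqslant n/L$, hence $|M_j|\leqslant 2n/L$. You then assert $2n/L\leqslant 2n/(\log_2 n)^2$; this inequality is backwards, since $L=\lfloor(\log_2 n)^2\rfloor\leqslant(\log_2 n)^2$ gives $1/L\geqslant 1/(\log_2 n)^2$. The fix is to include one more comparison: since $j\in S_{\leqslant\Delta}(R)\setminus T(R)$ exists, $|S_{\leqslant\Delta}(R)|\geqslant L+1$, so $u_{L+1}$ is defined and $j$ was also a candidate at step $L+1$. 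Hence $\cp(G_{T_L})-\cp(G_{T_{L+1}})\geqslant\cp(G_{T_L})-\cp(G_{T_L\cup\{j\}})$ as well, and summing over $k=0,\ldots,L$ together with $\cp(G_{T_0})-\cp(G_{T_{L+1}})\leqslant n$ yields $\cp(G_{T_L})-\cp(G_{T_L\cup\{j\}})\leqslant n/(L+1)$. Since $L+1>(\log_2 n)^2$, this gives the required $|M_j|\leqslant 2n/(L+1)\leqslant 2n/(\log_2 n)^2$. This is exactly the extra term the paper obtains by comparing $\cp(G_{T_L\cup\{j\}})$ with $\cp(H_{L+1})$.
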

\begin{proof}
Note that if $|S_{\leqslant \Delta}(R)| \leqslant L = \lfloor (\log_2 n)^2 \rfloor$, then $T(R) = S_{\leqslant \Delta}(R)$ and the statement is trivial. We will thus assume that $s = |S_{\leqslant \Delta}(R)| \geqslant L + 1$ and so $|T(R)| = L$.

For $1 \leqslant i \leqslant s$, write $H_i = G_{\lbrace u_1, \ldots, u_i \rbrace}$ and $x_i = \cp(H_{i-1}) - \cp(H_i)$, where $H_0 = G_\emptyset$ is the empty graph on $[n]$. Note that $\cp(H_i) = \cp(H_{i-1} + \overrightarrow{E}_{u_i}) \leqslant \cp(H_{i-1})$, and so $x_i \geqslant 0$ for each $i$; we also have that
\[ \sum_{i=1}^s x_i = \sum_{i=i}^s \big( \cp(H_{i-1}) - \cp(H_i) \big) = \text{cp}(H_0) - \text{cp}(H_s) \leqslant \cp(H_0) = n. \]
Now fix an $i$ and suppose that $x_i < x_{i+1}$; then
\begin{align*}
\cp(H_{i-1}) - \cp(H_i) &< \cp(H_i) - \cp(H_{i+1}) \\
&= \cp(H_{i-1} + \overrightarrow{E}_{u_i}) - \cp(H_{i-1} + \overrightarrow{E}_{u_i} + \overrightarrow{E}_{u_{i+1}}) \\
&\leqslant \cp(H_{i-1}) - \cp(H_{i-1} + \overrightarrow{E}_{u_{i+1}}),
\end{align*}
from Proposition \ref{comptower}. But then $\cp(G_{\lbrace u_1, \ldots u_{i-1}, u_i \rbrace}) > \cp(G_{ \lbrace u_1, \ldots u_{i-1}, u_{i+1} \rbrace})$, contradicting our ordering of the vertices. Hence $x_i \geqslant x_{i+1}$, and as $i$ was arbitrary we conclude that $(x_i)_{i=1}^s$ is a decreasing sequence. From this and the fact that $\sum_{i=1}^s x_i \leqslant n$ it follows that $x_{L+1} \leqslant n/(L+1) \leqslant n(\log_2 n)^{-2}$.

Now take $j \in S_{\leqslant \Delta}(R) \setminus T(R)$, so in our ordering of $S_{\leqslant \Delta}(R)$ we have $j = u_k$ for some $k > L$. By our construction, $\cp(G_{\lbrace u_1, \ldots, u_L, j \rbrace}) \geqslant \cp(G_{\lbrace u_1, \ldots, u_L, u_{L+1} \rbrace})$; noting that $G_{T(R)} = G_{\lbrace u_1, \ldots, u_L \rbrace} = H_L$, we may rewrite this as $\cp(G_{T(R)} + \overrightarrow{E}_j) \geqslant \cp(H_{L+1})$, and thus 
\[ \cp(G_{T(R)}) - \cp(G_{T(R)} + \overrightarrow{E}_j) \leqslant \cp(H_L) - \cp(H_{L+1}) = x_{L+1} \leqslant \frac{n}{(\log_2 n)^2}. \]
We can combine this with Corollary \ref{Jbound} to see that 
\[ |M_j| = |M(G_{T(R)}, \overrightarrow{E}_j)| \leqslant 2\bigl(\cp(G_{T(R)}) - \cp(G_{T(R)} + \overrightarrow{E}_j)\bigr) \leqslant \frac{2n}{(\log_2 n)^2}, \]
showing the result. 
\end{proof}
Before formally defining the information $\mathcal{I}(R)$ associated with a rack $R$, we will need some more notation. Firstly, write
\[
 T^+(R) := T(R) \cup \Gamma_R^+(T(R)).
\]
For any $j \in S_{\leqslant \Delta}(R) \setminus T(R)$, define
\[
 Y_j := \bigcup_{C \in M_j} C
\]
to be the set of \emph{vertices} in components of $G_{T(R)}$ merged by $\overrightarrow{E}_j$, and write
\[
 Z_j := [n] \setminus Y_j
\]
(see Figure \ref{Yjfig}).
\begin{figure}[tb]
\begin{center}
\begin{tikzpicture}
\draw (2.5,0.1) ellipse (1 and 0.5);
\draw (0,0) ellipse (0.5 and 0.6);
\draw (-0.1,1.8) ellipse (1.1 and 0.4);
\draw (3,2.4) ellipse (0.8 and 0.6);
\draw [fill = cyan] (4.5,0.75) ellipse (0.5 and 0.5);
\draw [fill = cyan] (6,1) ellipse (0.4 and 0.4);
\draw [fill = cyan] (6.3,2.2) ellipse (1.3 and 0.4);
\draw [fill = cyan] (4.8,3) ellipse (0.4 and 0.4);
\draw [fill = cyan] (5.4,-0.2) ellipse (0.6 and 0.4);
\draw (1.5,1.1) ellipse (0.5 and 0.5);
\draw (8.7,1) ellipse (0.5 and 1.5);

\draw [blue, thick] (4.5,0.8) -- (5.1,-0.4);
\draw [blue, thick] (5.5,0) -- (6.1,1.2);
\draw [blue, thick] (4.8,2.9) -- (5.4,2.1);
\draw [blue] (8.6,1.3) -- (8.8,-0.2);
\draw [blue] (2.5,2.2) -- (3.6,2.6);
\draw [blue] (8.8,2) -- (9,0.5);
\draw [blue] (-0.2,-0.2) -- (0.3,0.3);

\draw [blue, fill=blue] (2.3,0) circle [radius=0.1];

\node [right] at (2.35,0) {$j$};
\node [right] at (4.4,1.7) {$Y_j$};
\end{tikzpicture}
\end{center}
\caption{A representation of the components of $G_{T(R)}$, with the edges of colour $j$ in blue. Here, precisely five components (shaded light blue) are merged by the edges of colour $j$,  so $|M_j| = 5$; $Y_j$ is the set of vertices in these shaded components. If $j \in S_{\leqslant \Delta}(R) \setminus T(R)$ then only the restriction $f_j|_{Y_j}$ is included in the information $\mathcal{I}(R)$.}
\label{Yjfig}
\end{figure}
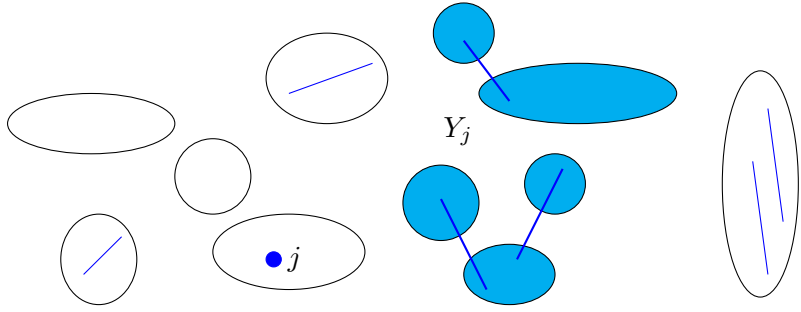
The following lemma gives the key property of the set $Z_j$, in a slightly more general setting. If $U,V \subseteq [n]$ are such that $(V)f_j = V$ for all $j \in U$, we say that $V$ is \emph{$U$-invariant}; we will write $j$-invariant instead of $\lbrace j \rbrace$-invariant.
\begin{lemma}
\label{YZsplit}
Let $R = ([n], \triangleright)$ be a rack, $W \subseteq [n]$ and $j \in [n]$. Let $C \subseteq [n]$ span a component of $G_W$, with $C \notin M(G_W, \overrightarrow{E}_j)$. Then $C$ is $j$-invariant.
\end{lemma}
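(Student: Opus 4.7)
The plan is to unpack the non-merging hypothesis directly, showing that $f_j$ sends $C$ into $C$, and then upgrade this to equality via bijectivity of $f_j$. There is no real obstacle here: this lemma is essentially a translation between the combinatorial statement about merging edges and the algebraic statement about $j$-invariance of $C$.

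First I would recall the definition of a merging edge. The assumption $C \notin M(G_W, \overrightarrow{E}_j)$ says precisely that there is no edge of colour $j$ in $G_R$ with exactly one endpoint in $C$; equivalently, for every (ordered) colour-$j$ edge $\overrightarrow{xy}$ of $G_R$, either both of $x,y$ lie in $C$, or neither does. Note that we do \emph{not} need to know anything about $W$ beyond the fact that $C$ spans a component of $G_W$, so the role of $W$ is just to pin down what $C$ is.

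Next I would fix an arbitrary $u \in C$ and track where $f_j$ sends it. Set $v := (u)f_j$. If $v = u$, then trivially $v \in C$. If instead $v \neq u$, then by the definition of $G_{\lbrace j \rbrace}$ there is a directed edge $\overrightarrow{uv}$ of colour $j$ in $G_R$, i.e.\ $uv \in \overrightarrow{E}_j$. Since $u \in C$ and $C$ is not merged by $\overrightarrow{E}_j$, the other endpoint $v$ must also lie in $C$. In either case $(u)f_j \in C$, so $(C)f_j \subseteq C$.

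Finally, I would invoke bijectivity: $f_j$ is a permutation of $[n]$, so it is injective, and $C$ is finite; hence $(C)f_j \subseteq C$ forces $(C)f_j = C$. This is exactly the statement that $C$ is $j$-invariant, completing the proof.
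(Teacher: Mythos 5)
Your proof is correct and takes essentially the same route as the paper's: fix an arbitrary $x \in C$, use the non-merging hypothesis to conclude that if $(x)f_j \neq x$ then $(x)f_j$ stays in $C$, and then upgrade $(C)f_j \subseteq C$ to equality via bijectivity and finiteness. The only difference is that you spell out the last step slightly more explicitly than the paper does.
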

\begin{proof}
Take a set $C$ as described and let $x \in C$ be arbitrary. If $(x)f_j = y \neq x$ then $\overrightarrow{xy}$ is an edge of colour $j$; as $C$ is \emph{not} merged by $\overrightarrow{E}_j$, we must have $(x)f_j = y \in C$ and (as $x$ was arbitrary and $f_j$ is a bijection) it follows that $(C)f_j = C$.
\end{proof}
Now apply this lemma with $W = T(R)$; for any $C \subseteq [n]$ spanning a component of $G_{T(R)}$ with $C \notin M_j$, $(C)f_j = C$. As $Z_j$ consists of all vertices in components of $G_{T(R)}$ not merged by $\overrightarrow{E}_j$, we have that $(Z_j)f_j = Z_j$, and thus that $(Y_j)f_j = Y_j$. 

We will now formally define the information associated with a rack $R$.
\begin{definition}
\label{calinfodef}
Let $R = ([n], \triangleright)$ be a rack and let $\Delta = (\log_2 n)^3$.
Then with notation as above let $\mathbf{M}$ be the $(|S_{\leqslant \Delta}(R)|-|T(R)|)$-tuple
\[
 \mathbf{M} := (M_j)_{j \in S_{\leqslant \Delta}(R) \setminus T(R)},
\]
where we order the vertices in some arbitrary way, and let
\[
 Y := \bigcup_{j \in S_{\leqslant \Delta}(R) \setminus T(R)}( Y_j \times \lbrace j \rbrace) \subseteq [n]^2.
\]
We define
\[ \mathcal{I}(R) := \left(S_{\leqslant \Delta}(R), \; \triangleright|_{[n] \times S_{> \Delta}}, \; T(R), \; \triangleright|_{T(R) \times [n]}, \; \triangleright|_{[n] \times T^+(R)}, \; \mathbf{M}, \; \triangleright|_Y \right). \]
\end{definition}
As $i \triangleright j = (i)f_j$, the second entry of this 7-tuple is equivalent to the set of maps $(f_j)_{j \in S_{> \Delta}(R)}$ or alternatively the graph $G_{S_{> \Delta}(R)}$. The fourth entry is equivalent to the set of maps $(f_j|_{T(R)})_{j \in [n]}$; from Lemma \ref{hilosplit}, the image of each of these maps is contained within $S_{\leqslant \Delta}(R)$. Knowing the fourth entry determines $\Gamma_R^+(T(R))$, which is necessary for the fifth entry. Note also that the fifth entry is equivalent to the set of maps $(f_j)_{j \in T^+(R)}$ and thus the graph $G_{T^+(R)}$, while the seventh entry is equivalent to $(f_j|_{Y_j})_{j \in S_{\leqslant \Delta}(R) \setminus T(R)}$.

We will think of $\mathcal{I}$ as a map from $\mathcal{R}_n$ to a set of 7-tuples; the form of this set will be considered in more detail in Section \ref{ressec}. In the next section, we show that the image $\mathcal{I}(\mathcal{R}_n)$ is small. We will do this by first considering the map $\mathcal{I}'$, where $\mathcal{I}'(R) = (\mathcal{I}_1(R), \mathcal{I}_2(R))$ and then $\mathcal{I}''$, where $\mathcal{I}''(R) = (\mathcal{I}_3(R), \ldots, \mathcal{I}_7(R))$.

\section{Determining the information $\mathcal{I}(R)$} 
\label{detsec}
\subsection{Random subsets}

The part of the argument relating to the vertices of high degree requires some probabilistic tools. In particular, we will require a result of Hoeffding \cite{genChernoff} known as the \emph{Chernoff bounds}; we will use the following, more workable version (see for example Theorems 2.1 and 2.8 and Corollary 2.3 of \cite{graphs}). 
\begin{theorem}
\label{Chercor}
Let $X_1, \ldots, X_n$ be independent random variables, each taking values in the range $[0,1]$. Let $X = \sum_{i=1}^n X_i$. Then for any $\epsilon \in [0,1]$, 
\[ \mathbb{P}(X \geqslant (1+ \epsilon)\mathbb{E}[X]) \leqslant e^{-\epsilon^2\mathbb{E}[X]/3} \]
and 
\[ \mathbb{P}(X \leqslant (1 - \epsilon)\mathbb{E}[X]) \leqslant e^{-\epsilon^2\mathbb{E}[X]/2}. \]
\qed
\end{theorem}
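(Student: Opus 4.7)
The plan is to follow the classical moment-generating-function (``Cramér--Chernoff'') argument, applied separately for the upper and lower tails. The crucial input is a one-variable bound on $\mathbb{E}[e^{tX_i}]$; given that, independence turns the product into a bound on $\mathbb{E}[e^{tX}]$, and Markov's inequality does the rest. The main obstacle is really just the final numerical step of showing that the sharp ``entropy'' bound implies the clean exponential form $e^{-\epsilon^2\mu/3}$ (respectively $e^{-\epsilon^2\mu/2}$); this is a routine but slightly fiddly calculus exercise.

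First I would establish, for any $t \in \mathbb{R}$ and any random variable $Y \in [0,1]$ with mean $p$, the bound $\mathbb{E}[e^{tY}] \leqslant e^{p(e^t-1)}$. The reason is that $y \mapsto e^{ty}$ is convex, so on $[0,1]$ it lies below the secant line joining its values at $0$ and $1$; hence $e^{tY} \leqslant 1 + Y(e^t-1)$, and taking expectations gives $\mathbb{E}[e^{tY}] \leqslant 1 + p(e^t-1) \leqslant e^{p(e^t-1)}$ using $1+x\leqslant e^x$. Writing $\mu = \mathbb{E}[X]$ and using independence of the $X_i$,
\[
\mathbb{E}[e^{tX}] = \prod_{i=1}^n \mathbb{E}[e^{tX_i}] \leqslant \exp\!\Big(\sum_{i=1}^n p_i(e^t-1)\Big) = e^{\mu(e^t-1)}.
\]

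For the upper tail, I would apply Markov's inequality with $t>0$: for every such $t$,
\[
\mathbb{P}(X \geqslant (1+\epsilon)\mu) \leqslant e^{-t(1+\epsilon)\mu}\,\mathbb{E}[e^{tX}] \leqslant \exp\!\bigl(\mu\bigl(e^t - 1 - t(1+\epsilon)\bigr)\bigr).
\]
Optimising by choosing $t = \ln(1+\epsilon)$ yields the standard bound $\bigl(e^\epsilon/(1+\epsilon)^{1+\epsilon}\bigr)^\mu$. To reach the form stated in the theorem it then suffices to verify, for $\epsilon \in [0,1]$, the inequality $\epsilon - (1+\epsilon)\ln(1+\epsilon) \leqslant -\epsilon^2/3$; this follows by Taylor-expanding $\ln(1+\epsilon)$ and comparing coefficients (the left side has series $-\epsilon^2/2 + \epsilon^3/6 - \cdots$, and the cubic term is controlled on $[0,1]$). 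For the lower tail I would repeat the argument with $t<0$, using the same MGF bound, and optimise at $t = \ln(1-\epsilon)$ to obtain $\bigl(e^{-\epsilon}/(1-\epsilon)^{1-\epsilon}\bigr)^\mu$; the analogous calculus inequality $-\epsilon - (1-\epsilon)\ln(1-\epsilon) \leqslant -\epsilon^2/2$, valid for $\epsilon \in [0,1)$, finishes the proof. The lower-tail estimate is slightly better (denominator $2$ rather than $3$) precisely because the cubic correction in the Taylor expansion of $\ln(1-\epsilon)$ has the favourable sign.
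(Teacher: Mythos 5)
The paper does not actually prove Theorem~\ref{Chercor}: it states the result with a terminal \qed\ and refers the reader to Hoeffding \cite{genChernoff} and to Theorems~2.1, 2.8 and Corollary~2.3 of \cite{graphs}, so there is no internal proof to compare against. Your proposal is the standard Cram\'er--Chernoff exponential-moment argument used in those references, and its outline is correct: convexity of $y\mapsto e^{ty}$ on $[0,1]$ gives $\mathbb{E}[e^{tX_i}]\leqslant e^{p_i(e^t-1)}$, independence and Markov give the ``entropic'' bound $\bigl(e^{\pm\epsilon}/(1\pm\epsilon)^{1\pm\epsilon}\bigr)^{\mathbb{E}[X]}$ after optimising at $t=\ln(1\pm\epsilon)$, and a calculus inequality then yields the stated clean exponents.

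Two points in the sketch need more care. First, the upper-tail inequality $\epsilon-(1+\epsilon)\ln(1+\epsilon)\leqslant-\epsilon^2/3$ on $[0,1]$ does \emph{not} follow from a naive term-by-term comparison of Taylor coefficients, because the $\epsilon^3$ term of the left-hand side is $+\epsilon^3/6$ and so points the wrong way; you acknowledge this but do not close the gap. A clean fix: let $g(\epsilon)=\epsilon-(1+\epsilon)\ln(1+\epsilon)+\epsilon^2/3$, so $g(0)=0$ and $g'(\epsilon)=2\epsilon/3-\ln(1+\epsilon)$ with $g'(0)=0$; since $g''(\epsilon)=2/3-1/(1+\epsilon)$ changes sign exactly once on $(0,1)$ and $g'(1)=2/3-\ln 2<0$, it follows that $g'\leqslant 0$ on $[0,1]$, hence $g\leqslant 0$. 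Second, the lower-tail optimiser $t=\ln(1-\epsilon)$ is undefined at $\epsilon=1$, which the theorem allows; that endpoint is handled by letting $t\to-\infty$ in $\mathbb{P}(X\leqslant 0)\leqslant e^{\mathbb{E}[X](e^t-1)}$, giving $\mathbb{P}(X\leqslant 0)\leqslant e^{-\mathbb{E}[X]}\leqslant e^{-\mathbb{E}[X]/2}$, or simply by continuity in $\epsilon$. With these two repairs the argument is complete and matches the cited standard proof.
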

Let $R = ([n], \triangleright)$ be a rack. To ease notation, we write $d_v^+ = d_R^+(v)$ for any $v \in [n]$.
\begin{lemma}
\label{random}
Let $R = ([n], \triangleright)$ be a rack and let $p, \epsilon \in (0,1)$. Construct a random subset $X$ of $[n]$ by retaining each element with probability $p$, independently for all elements. Then
\begin{enumerate}
\item $\mathbb{P}(|X| \geqslant (1 + \epsilon)np) \leqslant e^{-\epsilon^2np/3}$;
\item For any $v \in [n]$ and $0 < \delta \leqslant d_v^+p$, $\mathbb{P} \big( d_X^+(v) \leqslant (1- \epsilon)\delta \big) \leqslant e^{-\epsilon^2 \delta /2}$.
\end{enumerate}
\end{lemma}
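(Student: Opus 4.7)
The plan is to prove both parts as direct applications of the Chernoff bounds in Theorem~\ref{Chercor}, the only work being to express the random variables in question as sums of \emph{independent} $[0,1]$-valued variables.

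Part~1 is immediate: write $|X| = \sum_{i=1}^n X_i$ where $X_i$ is the indicator that $i \in X$, so that the $X_i$ are independent Bernoulli($p$) variables with $\mathbb{E}[|X|] = np$, and the upper-tail bound in Theorem~\ref{Chercor} gives the claim at once.

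For part~2 the essential observation is that we can partition the colours of edges out of $v$ by the target vertex. For each $w \in \Gamma_R^+(v)$, set $A_w := \{j \in [n] : (v)f_j = w\}$; since each $f_j$ is a function of $v$, the sets $A_w$ are pairwise disjoint, and each is nonempty (by the definition of $\Gamma_R^+(v)$), giving exactly $d_v^+$ nonempty fibres. Then $w$ is an out-neighbour of $v$ in $G_X^0$ iff $X \cap A_w \neq \emptyset$, so
\[ d_X^+(v) = \sum_{w \in \Gamma_R^+(v)} Y_w, \qquad Y_w := \mathbf{1}_{X \cap A_w \neq \emptyset}. \]
Because the $A_w$ are disjoint and elements of $X$ are chosen independently, the $Y_w$ are independent $\{0,1\}$-valued variables, and $\mathbb{E}[Y_w] = 1 - (1-p)^{|A_w|} \geqslant p$ since $|A_w| \geqslant 1$. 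Hence $\mu := \mathbb{E}[d_X^+(v)] \geqslant d_v^+ p \geqslant \delta$.

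The only mildly delicate step is that $\mu$ may strictly exceed $\delta$, while the bound we want is stated in terms of $\delta$. I would handle this by setting $\tilde{\epsilon} := 1 - (1-\epsilon)\delta/\mu$, so that $(1-\tilde{\epsilon})\mu = (1-\epsilon)\delta$; since $\delta \leqslant \mu$ we have $\tilde{\epsilon} \geqslant \epsilon$, and moreover $\tilde{\epsilon}\mu = \mu - (1-\epsilon)\delta \geqslant \epsilon\delta$, which multiplied by $\tilde{\epsilon} \geqslant \epsilon$ yields $\tilde{\epsilon}^2 \mu \geqslant \epsilon^2 \delta$. Applying the lower-tail Chernoff bound in Theorem~\ref{Chercor} to $\sum_w Y_w$ then gives
\[ \mathbb{P}\bigl(d_X^+(v) \leqslant (1-\epsilon)\delta\bigr) = \mathbb{P}\bigl(d_X^+(v) \leqslant (1-\tilde{\epsilon})\mu\bigr) \leqslant e^{-\tilde{\epsilon}^2 \mu/2} \leqslant e^{-\epsilon^2 \delta/2}, \]
as required. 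I do not foresee any serious obstacle: once the fibres $A_w$ are identified, disjointness makes the independence transparent and the rest is routine bookkeeping with the Chernoff constants.
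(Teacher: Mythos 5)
Your proof is correct, but part~2 takes a genuinely different route from the paper's. You partition the colour set into the fibres $A_w = \{j \in [n] : (v)f_j = w\}$ for $w \in \Gamma_R^+(v)$ and decompose $d_X^+(v)$ \emph{exactly} as a sum of independent, non-identically-distributed indicators $Y_w = \mathbf{1}_{X \cap A_w \neq \emptyset}$; you then absorb the fact that $\mu = \mathbb{E}\bigl[d_X^+(v)\bigr]$ may strictly exceed $d_v^+ p$ by reparameterizing, and the chain $\tilde{\epsilon} \geqslant \epsilon$, $\tilde{\epsilon}\mu = \mu - (1-\epsilon)\delta \geqslant \epsilon\delta$, hence $\tilde{\epsilon}^2\mu \geqslant \epsilon^2\delta$ is correct (one should also remark that $0 < \tilde{\epsilon} < 1$, which holds because $0 < (1-\epsilon)\delta/\mu < 1$, so the hypothesis of Theorem~\ref{Chercor} is met). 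The paper instead picks a \emph{single} representative colour $j_{vi}$ for each out-neighbour $v_i$, forming a set $J_v$ with $|J_v| = d_v^+$, and works with $|J_v \cap X| \sim \text{Bi}(d_v^+, p)$, a random variable dominated by $d_X^+(v)$ whose mean is \emph{exactly} $d_v^+ p$; Chernoff is then applied once at $(1-\epsilon)d_v^+ p$, and the hypothesis $\delta \leqslant d_v^+ p$ is used twice (once to compare events, once to compare exponents), with no renormalization needed. Both routes are valid: the paper's stochastic-domination trick is slightly shorter, while yours has the appeal of bounding $d_X^+(v)$ itself rather than a surrogate, at the cost of the $\tilde{\epsilon}$ bookkeeping.
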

\begin{proof}
For each $j$, let $X_j = \mathbf{1}_{\lbrace j \in X \rbrace}$, so that each $X_j \sim \text{Ber}(p)$ and the variables $(X_j)_{j=1}^n$ are independent. Then $|X| = \sum_{j=1}^n X_j \sim \text{Bi}(n,p)$ and $\mathbb{E}[X] = np$, so we can apply Theorem~\ref{Chercor} to $|X|$, showing the first statement.

For the second statement, take a vertex $v \in [n]$ and let $v_1, \ldots, v_{d_v^+}$ be the out-neighbours of $v$ in $G_R^0$; for each $1 \leqslant i \leqslant d_v^+$, choose an element $j_{vi} \in [n]$ such that $(v)f_{j_{vi}} = v_i$ and put
\[ J_v := \lbrace j_{vi} \mid i = 1, \ldots, d_v^+ \rbrace. \]
The elements $j_{v1}, \ldots, j_{vd_v^+}$ are clearly distinct and so $|J_v| = d_v^+$. Now 
\[ |J_v \cap X| = \sum_{i=1}^{d_v^+} X_{j_{vi}} \sim \text{Bi}(d_v^+, p), \]
so $\mathbb{E}[|J_v \cap X|] = d_v^+p$ and we can apply Theorem \ref{Chercor} to see that for any $0 < \delta \leqslant d_v^+p$, 
\[ \mathbb{P} \big( |J_v \cap X| \leqslant (1- \epsilon)\delta \big) \leqslant \mathbb{P} \big( |J_v \cap X| \leqslant (1- \epsilon)d_v^+p \big) \leqslant e^{-\epsilon^2 d_v^+p/2} \leqslant e^{-\epsilon^2 \delta /2}. \]
Since $d_X^+(v) \geqslant |J_v \cap X|$, the second result follows.
\end{proof}

\subsection{The high degree part}

We will need the following crucial lemma.
\begin{lemma}
\label{cruclem}
Let $R = ([n], \triangleright)$ be a rack and $T \subseteq [n]$. Let $C$ span a component of $G_T$, and let $v \in C$. Let $A \subseteq [n]$ be $[n]$-invariant. Then knowledge of the maps $(f_i|_A)_{i \in T}$ and $f_v|_A$ is sufficient to determine the maps $(f_u|_A)_{u \in C}$, and further, the maps $(f_u|_A)_{u \in C}$ are conjugate in $\Sym(A)$.
\end{lemma}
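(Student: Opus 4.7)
The plan is to combine Lemma \ref{comporb} with Lemma \ref{opgroup}. Since $C$ spans a component of $G_T = G_{\Sigma_T}$, Lemma \ref{comporb} tells us that $C$ is an orbit of $\langle \Sigma_T \rangle = \langle f_i : i \in T \rangle$ acting on $[n]$. Fixing the basepoint $v \in C$, for every $u \in C$ there exists some $g_u \in \langle f_i : i \in T \rangle$ with $(v)g_u = u$, expressible as a word in the $f_i$ and their inverses for $i \in T$.

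The second step is to apply Lemma \ref{opgroup} (with $g = g_u$ and $y = v$) to obtain
\[
 f_u = f_{(v)g_u} = g_u^{-1} f_v g_u.
\]
Now the $[n]$-invariance of $A$ is used precisely here: it implies that $(A)f_i = A$ for every $i \in [n]$, so $A$ is invariant under every element of the operator group. In particular $A$ is invariant under $f_v$, under $f_u$, and under $g_u$, so restriction to $A$ yields the clean identity
\[
 f_u|_A = (g_u|_A)^{-1} (f_v|_A) (g_u|_A)
\]
in $\Sym(A)$.

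For the determination claim, note that $g_u|_A$ is the corresponding word in the permutations $(f_i|_A)_{i \in T}$, so it is computable from the given data; combining this with the given $f_v|_A$ and the displayed conjugation formula recovers $f_u|_A$ for every $u \in C$. For the conjugacy claim, $g_u|_A \in \Sym(A)$ witnesses that $f_u|_A$ and $f_v|_A$ are conjugate in $\Sym(A)$; applying this to two arbitrary elements $u, u' \in C$ and composing the conjugators shows that $(f_u|_A)_{u \in C}$ form a single conjugacy class in $\Sym(A)$.

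The only point requiring any care is verifying that all the relevant restrictions genuinely live in $\Sym(A)$ so that the conjugation identity from Lemma \ref{opgroup} descends to $A$; this is exactly what $[n]$-invariance of $A$ buys us, and I do not expect a substantive obstacle elsewhere, since everything else follows by assembling two results already proved in the excerpt.
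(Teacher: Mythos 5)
Your proof is correct and takes essentially the same approach as the paper. The only cosmetic difference is that you invoke Lemma~\ref{comporb} to produce an arbitrary $g_u \in \langle f_i : i \in T \rangle$ with $(v)g_u = u$, whereas the paper invokes Lemma~\ref{dirpath} to exhibit a directed $vu$-path and takes $g_u = f_{i_1}\cdots f_{i_l}$ with only positive exponents; both then feed $g_u$ into Lemma~\ref{opgroup} and restrict to $A$ exactly as you do.
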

\begin{proof}
Let $u \in C$, so from Lemma \ref{dirpath} there is a directed $vu$-path in $G_T$. Let the colours of the edges of this path be $i_1, \ldots, i_l$, so $(v)f_{i_1} \cdots f_{i_l} = u$ and thus from Lemma \ref{opgroup} $f_u = f_{i_l}^{-1} \cdots f_{i_1}^{-1} f_v f_{i_1} \cdots f_{i_l}$. As $A$ is $[n]$-invariant, $(A)f_{i_j} = A$ for any $j$, and thus 
\[ f_u|_A = f_{i_l}^{-1}|_A \cdots f_{i_1}^{-1}|_A f_v|_A f_{i_1}|_A \cdots f_{i_l}|_A. \]
But as each $i_j \in T$, all of these maps are determined and thus $f_u|_A$ is determined. Also note that $f_u|_A$ is conjugate to $f_v|_A$ by the map $(f_{i_1} \cdots f_{i_l})|_A$, proving the result.
\end{proof}
We will now show the first main result of this section.
\begin{proposition}
\label{bigdegprop}
Let $R = ([n], \triangleright)$ be a rack. Then for $n$ sufficiently large there exists a set $W \subseteq [n]$, with 
\begin{equation}
\label{w0eq}
|W| \leqslant w_0(n) := \frac{7}{2}\frac{n}{(\log_2 n)^{3/2}},
\end{equation}
such that $\mathcal{I}'(R) = (\mathcal{I}_1(R),\mathcal{I}_2(R))$ is determined by the sets $S_{\leqslant \Delta}(R)$ and $W$ and the maps $(f_i)_{i \in W}$.
\end{proposition}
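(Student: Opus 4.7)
The plan is to build $W$ in two stages. First, a random stage chooses $W_0 \subseteq [n]$ so that every $v \in S_{>\Delta}(R)$ has many out-neighbours in $G_{W_0}^0$; second, a deterministic stage adds one representative from each $G_{W_0}$-component lying inside $S_{>\Delta}(R)$. Setting $W := W_0 \cup W_1$ will then guarantee that every $G_W$-component contained in $S_{>\Delta}(R)$ contains a vertex of $W$, so that Lemma \ref{cruclem} can be invoked to recover every $f_s$ with $s \in S_{>\Delta}(R)$. Throughout I use the observation (immediate from the proof of Lemma \ref{hilosplit}) that $S_{>\Delta}(R)$ is $[n]$-invariant, hence a union of components of $G_U$ for any $U \subseteq [n]$.

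For the random stage I would take $p := (\log_2 n)^{-3/2}$ and form $W_0$ by retaining each element of $[n]$ independently with probability $p$. Fix a small $\epsilon > 0$. By Lemma \ref{random}(1), $|W_0| \leqslant (1+\epsilon)np$ fails with probability at most $e^{-\epsilon^2 np/3}$. Applying Lemma \ref{random}(2) with $\delta = \Delta p = (\log_2 n)^{3/2}$ (legal because $d_v^+ > \Delta$ for every $v \in S_{>\Delta}(R)$) and union-bounding over the at most $n$ such vertices, the probability that some $v \in S_{>\Delta}(R)$ has $d_{W_0}^+(v) < (1-\epsilon)\Delta p$ is at most $n\,e^{-\epsilon^2 \Delta p/2}$. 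Both error terms are $o(1)$, since $\Delta p = (\log_2 n)^{3/2}$ grows faster than $\log n$, so a realisation satisfying both conditions exists and can be fixed.

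In the deterministic stage, each component $C$ of $G_{W_0}$ contained in $S_{>\Delta}(R)$ satisfies $|C| \geqslant (1-\epsilon)\Delta p + 1$, because any $v \in C$ has at least $(1-\epsilon)\Delta p$ distinct out-neighbours in $G_{W_0}^0$, all of which lie in $C$. Hence the number of such components is at most $n/((1-\epsilon)\Delta p)$. Picking one representative from each yields $W_1$ with $|W_1| \leqslant n/((1-\epsilon)\Delta p)$, and
\[
 |W| \;\leqslant\; (1+\epsilon)np + \frac{n}{(1-\epsilon)\Delta p} \;=\; \Bigl((1+\epsilon) + \frac{1}{1-\epsilon}\Bigr)\frac{n}{(\log_2 n)^{3/2}},
\]
which for small $\epsilon$ is close to $2n/(\log_2 n)^{3/2}$, comfortably below $w_0(n) = (7/2) n/(\log_2 n)^{3/2}$ for large $n$.

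For the determination step, we are given $S_{\leqslant\Delta}(R)$ (so $\mathcal{I}_1(R)$ is immediate and $S_{>\Delta}(R)$ is known), $W$, and $(f_i)_{i \in W}$, from which we can compute the graph $G_W$ and its components. Since $W_0 \subseteq W$, every $G_W$-component is a union of $G_{W_0}$-components; in particular, every $G_W$-component contained in $S_{>\Delta}(R)$ is a union of $G_{W_0}$-components inside $S_{>\Delta}(R)$, each of which meets $W_1 \subseteq W$ by construction. Hence every such $G_W$-component $C$ satisfies $C \cap W \neq \emptyset$; pick any $v \in C \cap W$ and apply Lemma \ref{cruclem} with $T = W$ and $A = [n]$ to recover $f_u$ for every $u \in C$. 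Ranging $C$ over all $G_W$-components inside $S_{>\Delta}(R)$ reconstructs $(f_s)_{s \in S_{>\Delta}(R)}$, which is $\mathcal{I}_2(R)$. The main obstacle I anticipate is the size accounting: the random and deterministic contributions to $W$ are of the same order, and their sum must fit under $w_0(n)$, but the slack between $2$ and $7/2$ accommodates this easily for a constant-order choice of $p$.
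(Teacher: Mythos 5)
Your proposal is correct and follows essentially the same two-stage strategy as the paper: a random choice of $W_0$ (the paper's $U$) via Lemma~\ref{random} with $p=(\log_2 n)^{-3/2}$ to guarantee every $S_{>\Delta}$-vertex has high degree, then one representative per $G_{W_0}$-component inside $S_{>\Delta}$ (the paper's $V$), and Lemma~\ref{cruclem} to reconstruct. The only cosmetic differences are keeping $\epsilon$ generic instead of fixing $\epsilon=1/2$ (which is what gives the paper its constant $3/2+2=7/2$), using a union bound rather than Markov on the count of bad vertices, and applying Lemma~\ref{cruclem} with $T=W$ rather than $T=U$ --- the latter being a small improvement in clarity, since $W$ (not $U$) is what is actually available at the reconstruction step.
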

\begin{proof}
We will construct the set $W$ by a mixture of probabilistic and deterministic arguments. Let $p = (\log_2 n)^{-3/2}$ and consider a random subset $X$ of $[n]$ as described in Lemma~\ref{random}. Let $\cE$ be the event that $|X| \leqslant 3np/2$; from item 1 of the lemma (with $\epsilon = 1/2$) we have that
\[ \mathbb{P}(\cE^\text{c}) \leqslant \mathbb{P}(|X| \geqslant 3np/2) \leqslant e^{-np/12}. \]
Since $np\to\infty$ as $n \to \infty$, it follows that $\mathbb{P}(\cE^\text{c})<1/2$ if $n$
is large enough.

Now for each $v \in [n]$ and $U \subseteq [n]$, call $v$ \emph{$U$-bad} if $d_U^+(v) \leqslant (\log_2 n)^{3/2}/2$, and let $\cB_v$ be the event that $v$ is $X$-bad, so that $\mathbb{P}(\cB_v) = \mathbb{P} \big( d_X^+(v) \leqslant (\log_2 n)^{3/2}/2 \big)$. Let $N = N(X)$ denote the number of $X$-bad vertices in $S_{> \Delta} := S_{> \Delta}(R)$, so that $N = \sum_{v \in S_{> \Delta}} \mathbf{1}_{\cB_v}$. Now $d_v^+p > \Delta p = (\log_2 n)^{3/2}$ for each $v \in S_{> \Delta}$; hence from item 2 of Lemma \ref{random} (with $\epsilon = 1/2$ and $\delta = \Delta p$), 
\[ \mathbb{E}[N] = \sum_{v \in S_{> \Delta}} \mathbb{P}(\cB_v) = \sum_{v \in S_{> \Delta}} \mathbb{P} \big( d_X^+(v) \leqslant (\log_2 n)^{3/2}/2 \big) \leqslant e^{ -(\log_2 n)^{3/2}/8} |S_{> \Delta}|. \]
Let $\cF$ be the event that $N = 0$, i.e.\ that every vertex in $S_{> \Delta}$ is $X$-good. Then from Markov's Inequality
\[ \mathbb{P}(\cF^\text{c}) = \mathbb{P}(N \geqslant 1) \leqslant \mathbb{E}[N] \leqslant |S_{> \Delta}| e^{-(\log_2 n)^{3/2}/8} \leqslant n e^{-(\log_2 n)^{3/2}/8} \to 0 \]
as $n \to \infty$. Hence $\mathbb{P}(\cF^\text{c}) <1/2$ if $n$ is large enough.

If $n$ is large enough, then $\mathbb{P}(\cE^\text{c} \cup \cF^\text{c})<1/2+1/2=1$,
and thus $\mathbb{P}(\cE \cap \cF) > 0$. Hence we can find a set $U \subseteq [n]$ such that $|U| \leqslant 3np/2$ and each vertex in $S_{> \Delta}$ is $U$-good; this means that $d_U^+(v) > (\log_2 n)^{3/2}/2$ whenever $v \in S_{> \Delta}$, or in graphical terms, that each vertex $v \in S_{> \Delta}$ is adjacent to at least $(\log_2 n)^{3/2}/2$ vertices in~$G_U^0$. 

Now from Lemma \ref{hilosplit} there are no edges from $S_{> \Delta}$ to $S_{\leqslant \Delta} = [n] \setminus S_{> \Delta}$, so $S_{> \Delta}$ is a disjoint union of vertex sets of components of $G_U^0$. Each component of $G_U^0$ contained within $S_{> \Delta}$ has size at least $(\log_2 n)^{3/2}/2 + 1$ and hence there are at most $2|S_{> \Delta}|(\log_2 n)^{-3/2}$ such components. Write the vertex sets of these components as $\lbrace C_1, \ldots, C_r \rbrace$ and take a set of vertices $V = \lbrace v_1, \ldots, v_r \rbrace$ such that $v_k \in C_k$ for each $k$; we have shown that 
\[ |V| = r \leqslant \frac{2n}{(\log_2 n)^{3/2}}.  \]
Now from Lemma \ref{cruclem} (with $T = U$ and $A = [n]$), knowledge of the maps $(f_i)_{i \in U}$ and $f_{v_k}$ is sufficient to determine the maps $(f_u)_{u \in C_k}$; applying this to each component in turn shows that knowledge of the maps $(f_i)_{i \in U}$ and $(f_v)_{v \in V}$ is sufficient to determine $(f_u)_{u \in S_{> \Delta}}$, i.e.\ the second entry of $\mathcal{I}'(R)$. So put $W = U \cup V$; as $|U| \leqslant 3n(\log_2 n)^{-3/2}/2$ we have the result.
\end{proof}
\begin{corollary}
\label{frakI1}
Let $\epsilon > 0$. There exists some positive integer $n_1$ such that for all $n \geqslant n_1$, $|\mathcal{I}'(\mathcal{R}_n)| \leqslant 2^{\epsilon n^2}$.
\end{corollary}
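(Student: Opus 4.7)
The plan is to convert Proposition~\ref{bigdegprop} into a direct counting argument. That proposition tells us that for every $R \in \mathcal{R}_n$ (with $n$ sufficiently large) there exists a set $W \subseteq [n]$ with $|W| \leqslant w_0(n) = \tfrac{7n}{2(\log_2 n)^{3/2}}$ such that the pair $\mathcal{I}'(R) = (S_{\leqslant \Delta}(R),\, \triangleright|_{[n] \times S_{>\Delta}(R)})$ is determined by the triple $\bigl(S_{\leqslant \Delta}(R),\, W,\, (f_i)_{i \in W}\bigr)$. Hence $|\mathcal{I}'(\mathcal{R}_n)|$ is bounded by the number of such triples, and I just need to show this total count is $2^{o(n^2)}$.

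First, I would count each coordinate separately. The number of possible sets $S_{\leqslant \Delta}(R) \subseteq [n]$ is at most $2^n$. The number of choices of $W$, since $|W| \leqslant w_0(n) \ll n/2$, is at most
\[
 \sum_{k=0}^{\lfloor w_0(n) \rfloor} \binom{n}{k} \leqslant (w_0(n)+1) \binom{n}{\lfloor w_0(n) \rfloor} \leqslant n \cdot n^{w_0(n)} = 2^{w_0(n)\log_2 n + \log_2 n}.
\]
Finally, for each fixed $W$, the tuple $(f_i)_{i \in W}$ is a sequence of $|W|$ permutations of $[n]$, contributing at most $(n!)^{|W|} \leqslant 2^{n w_0(n) \log_2 n}$ possibilities.

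Multiplying, the number of triples is bounded by
\[
 2^{n + w_0(n)\log_2 n + \log_2 n + n w_0(n)\log_2 n} = 2^{(1+o(1))\, n w_0(n) \log_2 n}.
\]
Substituting $w_0(n) = \tfrac{7n}{2(\log_2 n)^{3/2}}$, the exponent becomes
\[
 (1+o(1))\cdot \frac{7n^2 \log_2 n}{2(\log_2 n)^{3/2}} = O\!\left( \frac{n^2}{(\log_2 n)^{1/2}} \right) = o(n^2),
\]
so for any fixed $\epsilon > 0$ this is at most $2^{\epsilon n^2}$ once $n$ is large enough, yielding the required $n_1$.

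There is no real obstacle beyond bookkeeping; the only point worth verifying is that the genuinely costly contribution, namely listing the permutations $(f_i)_{i \in W}$ at a cost of $\Theta(n \log n)$ bits apiece, is absorbed by the fact that Proposition~\ref{bigdegprop} shrinks $|W|$ to $o(n / \sqrt{\log n})$. This is precisely where the factor of $(\log_2 n)^{3/2}$ in $w_0(n)$, arising from the thresholding at $\Delta = (\log_2 n)^3$ together with the Chernoff estimates, does its work: $n \cdot w_0(n) \log_2 n$ grows strictly more slowly than $n^2$, which is exactly what the corollary needs.
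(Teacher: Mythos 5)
Your proposal is correct and follows essentially the same route as the paper: bound $|\mathcal{I}'(\mathcal{R}_n)|$ by the number of triples $(S_{\leqslant\Delta}(R), W, (f_i)_{i\in W})$ supplied by Proposition~\ref{bigdegprop}, and observe that the dominant term $n\,w_0(n)\log_2 n = \Theta\bigl(n^2/(\log_2 n)^{1/2}\bigr)$ is $o(n^2)$. The only cosmetic difference is that you bound the number of choices of $W$ by $n\cdot n^{w_0(n)}$ rather than the paper's cruder $2^n$; both are more than adequate.
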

\begin{proof}
Take $n$ sufficiently large for the previous result to hold; then $\mathcal{I}'(R)$ is determined by $S_{\leqslant \Delta}(R)$ (or equivalently $S_{> \Delta}(R)$), $W$ and the maps $(f_i)_{i \in W}$ for a suitable set $W$ depending on $R$, with $|W| \leqslant w_0(n)$. Now fix such an $n$; it follows that $|\mathcal{I}'(\mathcal{R}_n)|$ is at most the number of distinct triples $(S_{\leqslant \Delta}(R), W, (f_i)_{i \in W})$ arising from all racks in $\mathcal{R}_n$. There are clearly at most $2^n$ possibilities for each of $S_{\leqslant \Delta}(R)$ and $W$; as there are $n! \leqslant n^n$ choices for any map $f_i$, and $|W| \leqslant w_0(n)$, there are at most $n^{nw_0(n)}$ possibilities for the maps $(f_i)_{i \in W}$. Hence $|\mathcal{I}'(\mathcal{R}_n)| \leqslant 2^{2n}n^{nw_0(n)}$, and from \eqref{w0eq} we have that
\begin{align*}
\log_2 \left( 2^{2n} n^{nw_0(n)} \right) &= 2n + n (\log_2 n) w_0(n) \\
&= 2n + n (\log_2 n) \frac{7}{2} \frac{n}{(\log_2 n)^{3/2}} \\
& = 2n + \frac{7}{2}\frac{n^2}{(\log_2 n)^{1/2}} \\
& = o(n^2).
\end{align*} 
Hence for any $\epsilon > 0$, there exists a positive integer $n_1$ such that for $n \geqslant n_1$, $|\mathcal{I}'(\mathcal{R}_n)| \leqslant 2^{2n}n^{nw_0} \leqslant 2^{\epsilon n^2}$.
\end{proof}

\subsection{Components of the graph $G_{T(R)}$}

In order to prove that there are few choices for $\mathcal{I}''(R) = (\mathcal{I}_3(R), \ldots, \mathcal{I}_7(R))$, we will need the following lemma. Recall that $T(R) \subseteq S_{\leqslant \Delta}(R)$ is defined in Construction \ref{Tconst} and that $T^+(R) = T(R) \cup \Gamma_R^+(T(R))$.
\begin{lemma}
\label{limitways}
Let $R = ([n], \triangleright)$ be a rack. Let $C$ span a component of $G_{T(R)}$, and let $v \in C$. Then for any $j \in [n]$, knowledge of the maps $(f_l|_{T(R)})_{l \in [n]}$ and $(f_i)_{i \in T^+(R)}$ and the vertex $(v)f_j$ is sufficient to determine the map $f_j|_C$.
\end{lemma}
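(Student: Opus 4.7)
The plan is to exploit axiom~3 of a rack in the form of a commutation rule, $f_y f_z = f_z f_{(y)f_z}$, which is immediate since both sides agree on every $x$ by axiom~3. For any $u \in C$, Lemma~\ref{dirpath} supplies a directed $vu$-path in $G_{T(R)}$ whose edge colours $i_1,\ldots,i_l$ lie in $T(R)$, so that $u = (v)f_{i_1}\cdots f_{i_l}$. Such a path is constructible from the given data: since $T(R) \subseteq T^+(R)$, the full maps $(f_i)_{i \in T(R)}$ are among those supplied, hence $G_{T(R)}$ itself is entirely visible, as is the component $C$.

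The key step is to push the unknown factor $f_j$ through the known factors from the right, applying $f_{i_k} f_j = f_j f_{(i_k)f_j}$ in turn for $k = l, l{-}1, \ldots, 1$, obtaining
\[
f_{i_1}\cdots f_{i_l} f_j \;=\; f_j\,f_{(i_1)f_j}\,f_{(i_2)f_j}\cdots f_{(i_l)f_j}.
\]
Evaluating both sides at $v$ yields
\[
(u)f_j \;=\; \bigl((v)f_j\bigr)\, f_{(i_1)f_j}\, f_{(i_2)f_j}\cdots f_{(i_l)f_j},
\]
and it then suffices to recognise every quantity on the right as determined by the given information.

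Checking each ingredient is routine: $(v)f_j$ is given by hypothesis; for each $k$, the element $(i_k)f_j$ is read off from $f_j|_{T(R)}$ (part of the data $(f_l|_{T(R)})_{l \in [n]}$) because $i_k \in T(R)$; and this element lies in $T^+(R) = T(R) \cup \Gamma_R^+(T(R))$, since it is either $i_k$ itself (if $f_j$ fixes $i_k$) or an out-neighbour of $i_k$ in $G_R^0$, and in either case belongs to $T^+(R)$. Therefore each permutation $f_{(i_k)f_j}$ is among the known maps $(f_i)_{i \in T^+(R)}$, and we can compose the factors explicitly to recover $(u)f_j$ for every $u \in C$.

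The only point that makes the argument slightly subtle is choosing to push $f_j$ leftwards past the known generators, rather than first expressing $f_u$ as a conjugate of $f_v$ via Lemma~\ref{opgroup}: the latter route would force us to know $f_j$ on the whole orbit, not just at $v$. The commutation identity instead transports the action of $f_j$ on $C$ to its action on $T(R)$, and the definition of $T^+(R)$ is designed precisely so that the resulting intermediate maps $f_{(i_k)f_j}$ all lie inside the supplied data.
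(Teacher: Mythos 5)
Your proof is correct and follows essentially the same approach as the paper: the paper proceeds by induction on the directed graph distance from $v$, obtaining the local recursion $(u)f_j = (w)f_j\,f_{(i)f_j}$ with $i \in T(R)$ the colour of the last edge on a shortest $vu$-path, and your explicit formula $(u)f_j = \bigl((v)f_j\bigr)f_{(i_1)f_j}\cdots f_{(i_l)f_j}$ is exactly what you get by unrolling that recursion along the whole path. The key idea — using the rack identity to transport $f_j$ past the $T(R)$-coloured generators so that the emerging conjugated maps land in $T^+(R)$ — is the same in both; your closing remark about why conjugating $f_v$ would demand too much information is a nice articulation of why $T^+(R)$ is defined as it is.
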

As in Definition \ref{calinfodef}, the maps $(f_l|_{T(R)})_{l \in [n]}$ determine the set $\Gamma^+_R(T(R))$ and thus the set $T^+(R)$.
\begin{proof}
Let $u \in C$; from Lemma \ref{dirpath} there is a directed $vu$-path in $G_{T(R)}$ (note that this graph is determined from the maps $(f_i)_{i \in T(R)}$, knowledge of which is assumed). Let $\overrightarrow{d}(v,u)$ denote the length of the shortest directed $vu$-path; we will show that $(u)f_j$ is determined by induction on the graph distance $d = \overrightarrow{d}(v,u)$. The base case $d=0$ is true by assumption, so take $d > 0$ and suppose the result is true for smaller $d$.

Take a shortest directed path (of length $d$) from $v$ to $u$ and let $w$ be the penultimate vertex on the path; then $\overrightarrow{wu}$ is an edge in $G_{T(R)}$, so there exists some $i \in T(R)$ such that $(w)f_i = u$. As we know $f_j|_{T(R)}$, $k := (i)f_j$ is determined; as $k \in T(R) \cup \Gamma_R^+(T(R)) = T^+(R)$, the map $f_k$ is also determined. Further, $\overrightarrow{d}(v,w) = d-1$, so by the inductive hypothesis the vertex $(w)f_j$ is determined. Now $f_k = f_{(i)f_j} = f_j^{-1}f_if_j$, and so $f_j = f_i^{-1}f_jf_k$; hence 
\[ (u)f_j = (u)f_i^{-1}f_jf_k = (w)f_jf_k, \]
which is determined as we know the vertex $(w)f_j$ and the map $f_k$. The result follows by induction. 
\end{proof}
We can now show the second main result of this section.
\begin{proposition}
\label{frakI2}
Let $\epsilon > 0$. Then there exists a positive integer $n_2$ such that for any $n \geqslant n_2$, $|\mathcal{I}''(\mathcal{R}_n)| \leqslant 2^{\epsilon n^2}$.
\end{proposition}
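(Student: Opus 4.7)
The plan is to reveal the five entries of $\mathcal{I}''(R) = (\mathcal{I}_3(R), \ldots, \mathcal{I}_7(R))$ one at a time and bound the number of choices at each step by $2^{o(n^2)}$, so that their product is at most $2^{\epsilon n^2}$ for large $n$. Four of the five bounds are essentially by inspection, using the size bounds $|T(R)| \leqslant L = \lfloor(\log_2 n)^2\rfloor$ from Construction~\ref{Tconst} together with $d_R^+(v) \leqslant \Delta = (\log_2 n)^3$ for every $v \in T(R) \subseteq S_{\leqslant \Delta}(R)$. The genuinely interesting step will be bounding the seventh entry $\triangleright|_Y$, where Lemma~\ref{limitways} is crucial.

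For the first four entries: the subset $T(R)$ contributes at most $2^n$ choices; the entry $\triangleright|_{T(R)\times[n]}$ amounts to choosing, for each $j \in [n]$, a function $f_j|_{T(R)} \colon T(R) \to [n]$, giving at most $n^{nL} = 2^{O(n(\log_2 n)^3)}$ choices; the entry $\triangleright|_{[n]\times T^+(R)}$ amounts to choosing $|T^+(R)|$ permutations of $[n]$, and since $|T^+(R)| \leqslant |T(R)|(1+\Delta) \leqslant 2(\log_2 n)^5$ this contributes at most $(n!)^{2(\log_2 n)^5} = 2^{O(n(\log_2 n)^6)}$ choices; and by Lemma~\ref{mergebound} each $M_j$ is a subset, of size at most $2n/(\log_2 n)^2$, of the at most $n$ components of $G_{T(R)}$, so the standard bound $\binom{n}{k} \leqslant (en/k)^k$ gives at most $2^{O(n\log\log n/(\log n)^2)}$ choices per coordinate and hence $2^{O(n^2\log\log n/(\log n)^2)}$ choices for $\mathbf{M}$ in total. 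Each of these contributions is $2^{o(n^2)}$.

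For the seventh entry, a naive specification of $f_j|_{Y_j}$ would use up to $\log_2(n!) \sim n\log_2 n$ bits per $j$, giving a total of order $n^2\log_2 n$ — far too large. This is where Lemma~\ref{limitways} is essential: once $\mathcal{I}_4$ and $\mathcal{I}_5$ have been revealed, the restriction $f_j|_C$ to any component $C$ of $G_{T(R)}$ is determined by the single value $(v_C)f_j$ at one canonically chosen seed $v_C \in C$. Since $Y_j$ is the union of the $|M_j|$ components on which $f_j$ acts non-trivially, there are at most $n^{|M_j|}$ choices for $f_j|_{Y_j}$, so the total number of possibilities for $\triangleright|_Y$ is at most
\[
 \prod_{j \in S_{\leqslant \Delta}(R)\setminus T(R)} n^{|M_j|} \;\leqslant\; n^{\,n\cdot 2n/(\log_2 n)^2},
\]
whose base-$2$ logarithm is $2n^2/\log_2 n = o(n^2)$. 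Combining with the earlier bounds gives $|\mathcal{I}''(\mathcal{R}_n)| \leqslant 2^{o(n^2)}$, as required. The main obstacle, as flagged, is this last step: only the interplay between Lemma~\ref{mergebound} (bounding $|M_j|$) and Lemma~\ref{limitways} (collapsing each component in $M_j$ to a single seed value) allows one to avoid paying $\Theta(n\log n)$ bits per $j$, and this is precisely why the information package $\mathcal{I}(R)$ was rigged to include the seemingly redundant restrictions on $T(R)$ and $T^+(R)$.
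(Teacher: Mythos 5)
Your proof is correct and follows essentially the same approach as the paper's: bound the first four entries of $\mathcal{I}''$ by inspection using $|T(R)|\leqslant L$ and $d_R^+(v)\leqslant\Delta$, and then handle $\triangleright|_Y$ by combining Lemma~\ref{mergebound} (at most $2n/(\log_2 n)^2$ merged components per $j$) with Lemma~\ref{limitways} (one seed value per component suffices once $\mathcal{I}_4$ and $\mathcal{I}_5$ are known), giving $n^{|M_j|}$ rather than $n!$ choices per map. The only difference is cosmetic: you use $(n!)^{|T^+(R)|}$ and $\binom{n}{\leqslant k}\leqslant(\textrm{poly}(n))\cdot(en/k)^k$ where the paper uses the cruder $n^{n|T^+(R)|}$ and $n^{|M_j|}$, yielding marginally tighter but qualitatively identical $2^{o(n^2)}$ bounds.
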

\begin{proof}
As in Corollary \ref{frakI1}, $|\mathcal{I}''(\mathcal{R}_n)|$ is equal to the number of distinct values of the 5-tuple $\mathcal{I}''(R)$ as $R$ ranges over all racks of order $n$; we will produce bounds for each of these entries. There are clearly at most $2^n$ choices for the set $T(R)$; recall that by construction $|T(R)| \leqslant L = \lfloor (\log_2 n)^2 \rfloor$ and that $T(R) \subseteq S_{\leqslant \Delta}(R)$, so $d_R^+(v) = |\Gamma_R^+(v)| \leqslant \Delta$ for any $v \in T(R)$. It follows that $|\Gamma_R^+(T(R))| \leqslant \sum_{v \in T(R)} d_R^+(v) \leqslant L\Delta$ and thus that $|T^+(R)| \leqslant L + L\Delta$. There are at most $n$ possibilities for the vertex $(u)f_i$ for any $i, u \in [n]$, so there are at most $(n^L)^n$ possibilities for the maps $(f_j|_{T(R)})_{j \in [n]}$ and at most $(n^n)^{L + L\Delta}$ possibilities for the maps $(f_i)_{i \in T^+(R)}$; hence there are at most $2^nn^{(2 + \Delta)Ln}$ possible values for the first three entries of $\mathcal{I}''(R)$ (as $R$ ranges over all racks of order $n$). 

Now consider a rack $R \in \mathcal{R}_n$ and suppose that the first three entries of $\mathcal{I}''(R)$ have been determined. Fix a $j \in S_{\leqslant \Delta}(R) \setminus T(R)$; we must consider the possibilities for the set $M_j$ of components of $G_{T(R)}$ merged by $\overrightarrow{E}_j$, and the restricted map $f_j|_C$ for each $C \in M_j$. If $|M_j| = a_j$ then there are (crudely) at most $n^{a_j}$ possibilities for $M_j$, as there are at most $n$ components of $G_{T(R)}$; from Lemma \ref{mergebound}, $a_j = |M_j| \leqslant 2n(\log_2 n)^{-2}$, so the number of possibilities for $M_j$ is at most
\[ \sum_{a_j = 0}^{ \lfloor 2n(\log_2 n)^{-2} \rfloor} n^{a_j} \leqslant \left( 1 + \frac{2n}{(\log_2 n)^2} \right) n^{2n(\log_2 n)^{-2}} \leqslant 3n \cdot n^{2n(\log_2 n)^{-2}}, \]
for sufficiently large $n$. Now take a $C \in M_j$ and choose an arbitrary $v \in C$; as the maps $(f_l|_{T(R)})_{l \in [n]}$ and $(f_i)_{i \in T^+(R)}$ have been determined already, we have from Lemma \ref{limitways} that the restriction $f_j|_C$ is determined entirely by $(v)f_j$. There are at most $n$ possibilities for this vertex, and so considering the $|M_j|$ components making up $Y_j$, there are at most $n^{|M_j|} \leqslant n^{2n(\log_2 n)^{-2}}$ possibilities for the restriction $f_j|_{Y_j}$.

Now note that $\mathbf{M}$ and $\triangleright|_Y$ are determined by $M_j$ and $f_j|_{Y_j}$ for each $j \in S_{\leqslant \Delta}(R) \setminus T(R)$; as there are at most $n$ such elements regardless of the set $S_{\leqslant \Delta}(R)$, there are at most $(3n)^n n^{2n^2(\log_2 n)^{-2}}$ possibilities for $\mathbf{M}$ and at most $n^{2n^2(\log_2 n)^{-2}}$ possibilities for $\triangleright|_Y$. Combining these bounds, there are at most 
\[ 2^nn^{(2 + \Delta)Ln}(3n)^n n^{4n^2(\log_2 n)^{-2}} = (6n)^n n^{(2 + \Delta)Ln + 4n^2(\log_2 n)^{-2}} =: \Lambda_n \]
possibilities for $\mathcal{I}''(R)$ as $R$ ranges over all racks of order $n$, for $n$ sufficiently large, and thus $|\mathcal{I}''(\mathcal{R}_n)| \leqslant \Lambda_n$. 

We have that $\Delta = (\log_2 n)^3$, so $2 + \Delta \leqslant 2(\log_2 n)^3$ for sufficiently large $n$. Thus for $n$ sufficiently large
\begin{align*}
\log_2 \left( \Lambda_n \right) &= n \log_2(6n) + (\log_2 n) \left( (2 + \Delta)Ln + \frac{4n^2}{(\log_2 n)^2} \right) \\
&\leqslant n \log_2 (6n) + 2n (\log_2 n)^6 + \frac{4n^2}{\log_2 n} \\
&= o(n^2).
\end{align*}
Hence for any $\epsilon > 0$, there exists a positive integer $n_2$ such that for $n \geqslant n_2$, $|\mathcal{I}''(\mathcal{R}_n)| \leqslant \Lambda_n \leqslant 2^{\epsilon n^2}$, proving the result.
\end{proof}

\section{Maps acting within components of $G_{T(R)}$}
\label{ressec}
\subsection{Some preparatory results}

We will need the following easy claim.

\begin{claim}
\label{calc}
For real numbers $x,y \geqslant 0$, $xy/3 + x^2/9 \leqslant (x+y)^2/8$.
\end{claim}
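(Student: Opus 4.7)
The plan is to clear denominators and show the inequality reduces to a non-negative perfect square.

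First I would multiply both sides by $72$ (the least common multiple of $3$, $9$, and $8$) to transform the claim into the equivalent statement
\[ 24xy + 8x^2 \leq 9(x+y)^2. \]
Expanding the right-hand side gives $9x^2 + 18xy + 9y^2$, so after rearranging this is equivalent to
\[ 0 \leq x^2 - 6xy + 9y^2 = (x-3y)^2, \]
which is obviously true for all real $x,y$ (and in fact without needing the nonnegativity hypothesis).

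There is no real obstacle here: the inequality is an algebraic identity hiding a perfect square, and the nonnegativity of $x$ and $y$ is not even used in the argument. The only small check is that the direction of the inequality is preserved when multiplying by the positive constant $72$.
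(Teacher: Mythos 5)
Your proof is correct and is essentially the same as the paper's: both identify that $(x+y)^2/8 - x^2/9 - xy/3 = (x-3y)^2/72 \geqslant 0$, you just arrive at it by clearing denominators first rather than stating the identity directly.
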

\begin{proof}
Simply observe that $(x + y)^2/8 - x^2/9 - xy/3 = (x - 3y)^2/72 \geqslant 0$.
\end{proof}
The above claim is used to prove the following key technical lemma. The notation is chosen to match the quantities defined in the next subsection.
\begin{lemma}
\label{n2vn}
Let $n$ be a positive integer and $(\eta_q)_{q=1}^n$ be a sequence of non-negative integers such that $\sum_{q=1}^n \eta_q = n$. Set
\[ \zeta = \left( \sum_{p=1}^n \frac{\eta_p}{p} \right) \left( \sum_{q=1}^n \frac{\log_2 q}{q} \eta_q \right). \]
Then $\zeta \leqslant n^2/4$.
\end{lemma}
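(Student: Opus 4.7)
The plan is to write $\zeta$ as a product and control it via AM--GM. Set
\[
 A := \sum_{p=1}^n \frac{\eta_p}{p}, \qquad B := \sum_{q=1}^n \frac{\log_2 q}{q}\eta_q,
\]
so that $\zeta = AB$. By the AM--GM inequality, $AB \leqslant \bigl((A+B)/2\bigr)^2$, so it suffices to prove $A + B \leqslant n$.

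Combining the two sums and using $\sum_p \eta_p = n$,
\[
 A + B \;=\; \sum_{p=1}^n \eta_p \cdot \frac{1 + \log_2 p}{p},
\]
so the task reduces to the pointwise estimate $(1 + \log_2 p)/p \leqslant 1$ for every positive integer $p$, equivalently $\log_2 p \leqslant p - 1$. For $p=1$ and $p=2$ both sides coincide, giving equality; for $p \geqslant 3$ the inequality is equivalent to $2^{p-1} \geqslant p$, which follows by a one-line induction (base $p=3$: $4 \geqslant 3$; step: $2^{p} \geqslant 2p \geqslant p+1$). Plugging back in gives $A + B \leqslant n$, and then AM--GM yields $\zeta \leqslant n^2/4$.

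The main subtlety to watch is that the integer restriction genuinely matters: viewed as a function of a real variable $p > 0$, the expression $(1 + \log_2 p)/p$ has a maximum slightly above $1$ (near $p = 1/\ln 2 \approx 1.44$, with value approximately $1.06$), so any attempt to replace the sum by an integral or to apply the bound on a continuous range would give a weaker constant than $1$. One has to invoke the discrete bound $\log_2 p \leqslant p-1$ directly. I expect the authors' proof routes through Claim \ref{calc}, presumably by peeling off the $p=1$ contribution (which contributes to $A$ but not $B$, creating an asymmetry) and then invoking $(x-3y)^2 \geqslant 0$ to recombine the pieces; the AM--GM argument above sidesteps this and reaches $n^2/4$ without needing Claim \ref{calc}, although the two approaches yield the same bound and both are tight at $\eta_2 = n$.
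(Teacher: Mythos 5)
Your proof is correct and takes a genuinely different route from the paper's. The paper expands $n^2/4 - \zeta$ directly as a quadratic form $\sum_q d_q\eta_q^2 + \sum_{p<q}c_{p,q}\eta_p\eta_q$ in the $\eta_q$, with $d_q = 1/4 - (\log_2 q)/q^2$ and $c_{p,q} = 1/2 - (\log_2 pq)/(pq)$; it checks that every coefficient is non-negative except $c_{1,3}$, and then invokes Claim~\ref{calc} --- whose content is exactly $(x-3y)^2\geqslant 0$ --- to absorb the one negative cross term $c_{1,3}\eta_1\eta_3$ into the diagonal terms $d_1\eta_1^2 + d_3\eta_3^2$. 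So your guess about how Claim~\ref{calc} enters was accurate. Your argument replaces that coefficient-by-coefficient analysis with a single AM--GM step, $AB \leqslant \bigl((A+B)/2\bigr)^2$, followed by the linear pointwise bound $\log_2 p \leqslant p-1$; this is shorter and more transparent, it avoids the special case $(p,q)=(1,3)$ entirely, and it makes the equality case $\eta_2 = n$ (where $A=B=n/2$) immediate. One minor correction to your side remark: the real maximiser of $(1+\log_2 p)/p$ on $(0,\infty)$ is $p = e/2 \approx 1.36$, not $1/\ln 2 \approx 1.44$ (the maximal value $2/(e\ln 2)\approx 1.06$ is as you say); this does not affect your methodological point that the integer bound, not a continuous estimate, is what makes the argument work.
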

\begin{proof}
By expanding the product, we have that
\begin{equation}
\label{deff}
\zeta = \sum_{q=1}^n \frac{\log_2 q}{q^2} \eta_q^2 + \sum_{p=1}^{n-1} \sum_{q = p+1}^n \frac{\log_2 p + \log_2 q}{pq} \eta_p\eta_q 
\end{equation}
and 
\[ \frac{n^2}{4} = \frac{(\eta_1 + \cdots + \eta_n)^2}{4} = \sum_{q=1}^n \frac{\eta_q^2}{4} + \sum_{p=1}^{n-1} \sum_{q = p+1}^n \frac{\eta_p\eta_q}{2}, \]
so if we set $\nu = n^2/4 - \zeta$, then
\[ \nu = \sum_{q=1}^n d_q \eta_q^2 + \sum_{p=1}^{n-1} \sum_{q = p+1}^n c_{p,q}\eta_p\eta_q, \] 
where $d_q := 1/4 - (\log_2 q)/q^2$ and $c_{p,q} := 1/2 - (\log_2 (pq))/pq$.

Since $2^r \geqslant r^2$ for all positive integers $r \neq 3$, $c_{p,q} \geqslant 0$ for $(p,q) \neq (1,3)$. Similarly, $2^{r^2} \geqslant r^4$ for all positive integers $r$ and thus $d_q \geqslant 0$ for all $q$; hence
\[ \nu \geqslant d_1\eta_1^2 + d_3\eta_3^2 + c_{1,3}\eta_1\eta_3. \]
We can bound this sum from below by using Claim \ref{calc} with $x = \eta_3$ and $y = \eta_1$; we have that
\begin{align*}
d_1\eta_1^2 + d_3\eta_3^2 + c_{1,3}\eta_1\eta_3 &= \frac{\eta_1^2}{4} + \left( \frac{1}{4} - \frac{\log_2 3}{9} \right)\eta_3^2 + \left( \frac{1}{2} - \frac{\log_2 3}{3} \right)\eta_1\eta_3 \\[3pt]
&= \frac{\eta_1^2}{4} + \frac{\eta_3^2}{4} + \frac{\eta_1\eta_3}{2} - (\log_2 3)\left( \frac{\eta_1\eta_3}{3} + \frac{\eta_3^2}{9} \right) \\[3pt]
&\geqslant \frac{(\eta_1 + \eta_3)^2}{4} - (\log_2 3) \frac{(\eta_1 + \eta_3)^2}{8} \\[3pt]
&\geqslant 0.
\end{align*}  
\end{proof}
A more elaborate version of this argument gives a corresponding stability result, saying (informally speaking) that $\zeta$ is close to $n^2/4$ if and only if $\eta_2$ is close to $n$ and $\eta_q$ is close to 0 for all $q \neq 2$; for full details see \cite{thesis}.

\subsection{Proof of Theorem \ref{mainthm}}
\label{tightss}

At the end of Section \ref{infosec}, we introduced the information $\mathcal{I}(R)$ in a rack $R$ on $[n]$, and explained how $\mathcal{I}$ can be thought of as a map from $\mathcal{R}_n$ to a set of 7-tuples; let us call this set $\mathfrak{I}_n$. In Section \ref{detsec} we showed that the image $\mathcal{I}(\mathcal{R}_n) \subseteq \mathfrak{I}_n$ has size at most $2^{o(n^2)}$; in this section, we will fix an $I$ in this image and consider all racks $R \in \mathcal{R}_n$ such that $\mathcal{I}(R) = I$. We will show that once the information corresponding to $I$ is known, there are not too many possibilities for $R$.

We will first consider the set $\mathfrak{I}_n$ of 7-tuples in more detail. From Definition \ref{calinfodef} and the subsequent discussion, each $I = (I_1, \ldots, I_7) \in \mathfrak{I}_n$ has the form described below.
\begin{enumerate}
\item $I_1$ is a set $S^I \subseteq [n]$;
\item $I_2$ is an $(n - |S^I|)$-tuple $(\sigma_i^I)$ of elements of $\Sym([n])$, indexed by $\tilde{S}^I := [n] \setminus S^I$ in some arbitrary order;
\item $I_3$ is a subset $T^I$ of $[n]$ such that $T^I \subseteq S^I$;
\item $I_4$ is an $n$-tuple $(\tau_i^I)_{i=1}^n$ of injective maps from $T^I$ to $S^I$;
\item $I_5$ is a sequence $(\sigma_i^I)$ of elements of $\Sym([n])$, indexed by the set $(T^I)^+ := T^I \cup (T^I)\tau_1^I \cup \cdots \cup (T^I)\tau_n^I$ in some arbitrary order.
\end{enumerate}
The last two entries of $\mathcal{I}(R)$ are graphical in nature; to relate them to an abstract $I \in \mathfrak{I}_n$ we will formally define the graph associated with such a 7-tuple $I$.
\begin{definition}
Let $I \in \mathfrak{I}_n$. Write $\Sigma^I = \lbrace \sigma_i^I \mid i \in T^I \rbrace$ and define $G_I := G_{\Sigma^I}$, in the sense of Definition \ref{multinvodef}, so $G_I$ is a $|T^I|$-edge-coloured multigraph on $[n]$. We write $c^I$ for $\cp(G_I)$ and $\mathcal{C}^I$ for the set of vertex sets of components of $G_I$, so $|\mathcal{C}^I| = c^I$. 
\end{definition} 
We can now describe the form of $I_6$ and $I_7$, namely:
\begin{enumerate}
\item[6.] $I_6$ is a sequence $(M_j^I)$ of subsets of $\mathcal{C}^I$, indexed by $S^I \setminus T^I$ in some arbitrary order;
\item[7.] $I_7$ is a sequence $(\psi_j^I)$ indexed by $S^I \setminus T^I$, where $Y_j^I = \bigcup_{C \in M_j^I} C$ and $\psi_j^I \in \Sym(Y_j^I)$ for all $j$.
\end{enumerate}
To avoid later inconvenience, we will extend the definition of $M_j^I$ and $Y_j^I$ to all $j \in [n]$ as follows. For each $j \in \tilde{S}^I \cup T^I$, define a set of ordered pairs from $[n]$ (which we can think of as edges of colour $j$) by setting
\begin{equation}
\label{EJdef}
\overrightarrow{E}_j^I := \lbrace \overrightarrow{xy} \mid (x)\sigma_j^I =y, \; x \neq y \rbrace.
\end{equation}
Then we define $M_j^I = M(G_I, \overrightarrow{E}_j^I)$ and $Y_j^I = \bigcup_{C \in M_j^I} C$.

Now fix an $I \in \mathfrak{I}_n$ and suppose $R \in \mathcal{R}_n$ is  a rack such that $\mathcal{I}(R) = I$. Recall that $\Delta = (\log_2 n)^3$ and
\[ \mathcal{I}(R) = \left(S_{\leqslant \Delta}(R), \; \triangleright|_{[n] \times S_{> \Delta}}, \; T(R), \; \triangleright|_{T(R) \times [n]}, \; \triangleright|_{[n] \times T^+(R)}, \; \mathbf{M}, \; \triangleright|_Y \right), \]
where each entry defined by $\triangleright$ can also be determined in terms of the maps $f_1, \ldots, f_n$ corresponding to $R$. Comparing $\mathcal{I}(R)$ with $I$ we see that $S_{\leqslant \Delta}(R) = S^I$, $S_{> \Delta}(R) = \tilde{S}^I$ and $T(R) = T^I$. We also have that $f_j = \sigma_j^I$ for $j \in \tilde{S}^I \cup (T^I)^+$, and thus $G_{T(R)} = G_I$. Finally, $M_j^I = M_j$ and $Y_j^I = Y_j$ for all $j \in [n]$, and $f_j|_{Y_j} = \psi_j^I$ for $j \in S^I \setminus T^I$.

This means that if the information $I = \mathcal{I}(R)$ is known, we need only determine the maps $(f_j|_{Z_j})_{j \in S_{\leqslant \Delta}(R) \setminus T(R)}$ to determine the entire rack $R$, noting that for each $j \in [n]$, the set $Z_j = [n] \setminus Y_j = [n] \setminus Y_j^I$ is determined by $I$. It follows that an upper bound on the number of possibilities for these maps is also an upper bound for the number of racks $R$ such that $\mathcal{I}(R) = I$.

We can further reduce the number of maps left to determine by considering components of the graph $G_I$.
\begin{lemma}
\label{1perC}
Let $I \in \mathfrak{I}_n$ be fixed and let $R \in \mathcal{R}_n$ be a rack such that $\mathcal{I}(R) = I$. Let $\mathcal{C}^I = \lbrace C_1, \ldots, C_{c^I} \rbrace$ and let $V = \lbrace v_1, \ldots, v_{c^I} \rbrace$ be a set of vertices such that $v_j \in C_j$ for each $j$. Then $R$ is determined by $I$ and the maps $(f_v|_{Z_v})_{v \in V}$.
\end{lemma}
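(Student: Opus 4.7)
The plan is to apply Lemma~\ref{cruclem} to $R$ with $T = T(R)$ and $A = [n]$. Since $[n]$ is trivially $[n]$-invariant, this reduces determining all of $(f_u)_{u \in [n]}$ (and hence the rack $R$) to the subproblem of determining the maps $(f_i)_{i \in T(R)}$ together with $f_{v_k}$ for one representative $v_k$ from each component $C_k$ of $G_I = G_{T(R)}$. Since the components $C_k$ partition $[n]$, chaining the conclusion of the lemma over all $k$ recovers every $f_u$, and so the whole of $R$.

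It thus suffices to argue that each of these maps is recoverable from $I$ together with the restrictions $(f_v|_{Z_v})_{v \in V}$. Since $T(R) = T^I \subseteq (T^I)^+$, the maps $(f_i)_{i \in T(R)}$ are already included in the fifth entry of $I$. For each $v_k \in V$ I would argue by cases based on the partition $[n] = T^+(R) \sqcup S_{>\Delta}(R) \sqcup (S_{\leqslant\Delta}(R) \setminus T^+(R))$, which is genuinely a partition because $T^+(R) \subseteq S_{\leqslant\Delta}(R)$ by Lemma~\ref{hilosplit}. If $v_k \in T^+(R)$ then $f_{v_k}$ is given by the fifth entry of $I$; if $v_k \in S_{>\Delta}(R) = \tilde{S}^I$ then $f_{v_k}$ is given by the second entry; and in the remaining case $v_k \in S^I \setminus T^I$, so the seventh entry of $I$ supplies $f_{v_k}|_{Y_{v_k}}$ while the hypothesis supplies $f_{v_k}|_{Z_{v_k}}$, and these combine (since $[n] = Y_{v_k} \sqcup Z_{v_k}$ by construction) to $f_{v_k}$ in full.

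There is no substantive obstacle here: the argument essentially amounts to spotting that Lemma~\ref{cruclem} can be applied with the trivially $[n]$-invariant set $A = [n]$, followed by a routine check that each $f_{v_k}$ can always be reassembled from the pieces stored in $\mathcal{I}(R)$ together with the single extra restriction $f_{v_k}|_{Z_{v_k}}$ coming from the hypothesis.
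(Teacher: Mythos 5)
Your proof is correct and takes essentially the same route as the paper: both apply Lemma~\ref{cruclem} with $T = T(R)$ and $A = [n]$ to reduce to one representative per component of $G_{T(R)} = G_I$, and then check that each representative's map is recoverable from $I$ together with the supplied restriction to $Z_v$. The only cosmetic difference is in the bookkeeping: the paper shows uniformly that $f_v|_{Y_v}$ is stored in $I$ for every $v \in V$ (as $\psi_v^I$ when $v \in S^I \setminus T^I$ and as $\sigma_v^I|_{Y_v^I}$ otherwise), whereas you split into three cases according to the partition $T^+(R) \sqcup S_{>\Delta}(R) \sqcup (S_{\leqslant \Delta}(R) \setminus T^+(R))$ and observe that in the first two cases $f_v$ is already entirely determined by $I$; both are correct and the underlying content is the same.
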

\begin{proof}
As $\mathcal{I}(R) = I$, we have that $G_{T(R)} = G_I$ and so the set of (vertex sets of) components of $T(R)$ is $\mathcal{C}^I$. Take any $j = 1, \ldots, c^I$; the knowledge of $I$ determines the maps $(f_i)_{i \in T(R)} = (\sigma_i^I)_{i \in T^I}$ and thus from Lemma \ref{cruclem} (with $A = [n]$) knowledge of the map $f_{v_j}$ is sufficient to determine the maps $(f_u)_{u \in C_j}$. Applying this over all the components of $G_{T(R)}$ shows that we can determine the entire rack $R$ by determining the set of maps $(f_v)_{v \in V}$. As the restrictions $(f_v|_{Y_v})_{v \in V}$ are determined by $I$ ($f_v|_{Y_v} = f_v|_{Y_v^I}$ is equal to $\psi_v^I$ if $v \in S^I \setminus T^I$ and to $\sigma_v^I|_{Y_v^I}$ otherwise), we need only determine the restrictions $(f_v|_{Z_v})_{v \in V}$. 
\end{proof}
Before proving an upper bound we will need some more notation. For $I \in \mathfrak{I}_n$ and $1 \leqslant q \leqslant n$, let $\eta_q^I$ denote the number of vertices in components of $G_I$ of size exactly $q$. Then there are $\eta_q^I/q$ components of size exactly $q$ and thus
\[ n = \sum_{q=1}^n \eta_q^I \quad \text{and} \quad c^I = \sum_{q=1}^n \frac{\eta_q^I}{q}. \]
\begin{proposition}
\label{zetabound}
Let $I \in \mathfrak{I}_n$ and define
\[ \zeta^I = \left( \sum_{p=1}^n \frac{\eta_p^I}{p} \right) \left( \sum_{q=1}^n \frac{\log_2 q}{q} \eta_q^I \right). \]
Then there are at most $2^{\zeta^I}$ racks $R \in \mathcal{R}_n$ such that $\mathcal{I}(R) = I$.
\end{proposition}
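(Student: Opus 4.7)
The plan is to apply Lemma \ref{1perC} first: having fixed $I \in \mathfrak{I}_n$, the number of racks $R$ with $\mathcal{I}(R) = I$ is at most the number of possible tuples $(f_v|_{Z_v})_{v \in V}$, where $V$ contains exactly one vertex from each component of $G_I$, so $|V| = c^I$. For each $v \in V$, the set $Z_v$ decomposes as a disjoint union of those components $C \in \mathcal{C}^I$ with $C \notin M_v$, and by Lemma \ref{YZsplit} each such $C$ is $v$-invariant, so $f_v|_{Z_v}$ is determined by the collection of permutations $(f_v|_C)_{C \subseteq Z_v}$ of the individual components.

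The main counting step then uses Lemma \ref{limitways}. The fourth and fifth entries of $I$ give us the maps $(f_l|_{T(R)})_{l \in [n]}$ and $(f_i)_{i \in T^+(R)}$, which are precisely the hypotheses of that lemma; hence for any fixed choice of $w \in C$, the map $f_v|_C$ is determined by the single vertex $(w)f_v$. As $(w)f_v$ must lie in $C$ (since $C$ is $v$-invariant), we get at most $|C|$ choices for $f_v|_C$, and thus at most $\prod_{C \subseteq Z_v} |C| \leqslant \prod_{C \in \mathcal{C}^I} |C|$ choices for $f_v|_{Z_v}$.

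Multiplying across $v \in V$ yields an upper bound of $\bigl(\prod_{C \in \mathcal{C}^I} |C|\bigr)^{c^I}$ on the number of consistent racks. Taking the base-2 logarithm and grouping components by size, $\sum_{C \in \mathcal{C}^I} \log_2 |C| = \sum_{q=1}^n (\eta_q^I/q)\log_2 q$, while $c^I = \sum_{p=1}^n \eta_p^I/p$; the product of these two quantities is exactly $\zeta^I$, as required. The crucial point that makes the argument work — and the main obstacle to any weaker version — is that Lemma \ref{limitways} collapses the a priori $|C|!$ possibilities for the permutation $f_v|_C$ down to at most $|C|$, by using the conjugation relation \eqref{maps} together with the already-revealed data in $I$; everything else is a straightforward accounting of component sizes.
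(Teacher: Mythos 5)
Your proof is correct and follows essentially the same route as the paper: Lemma~\ref{1perC} reduces the count to the tuples $(f_v|_{Z_v})_{v\in V}$, Lemma~\ref{YZsplit} gives the $v$-invariance of each component $C\subseteq Z_v$, Lemma~\ref{limitways} collapses $f_v|_C$ to at most $|C|$ choices, and the same accounting by component size yields $\zeta^I$. The paper's bound $N=\prod_{q}q^{\eta_q^I/q}$ is exactly your $\prod_{C\in\mathcal{C}^I}|C|$, so the arguments coincide.
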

\begin{proof}
Write $\mathcal{C}^I = \lbrace C_1, \ldots, C_{c^I} \rbrace$ and let $V = \lbrace v_1, \ldots, v_{c^I} \rbrace$ be a set of vertices such that $v_j \in C_j$ for each $j$. Let $R \in \mathcal{R}_n$ be a rack such that $\mathcal{I}(R) = I$; from Lemma \ref{1perC}, $R$ is determined by $I$ and the maps $(f_v|_{Z_v})_{v \in V}$. It follows that an upper bound on the number of possibilities for the maps $(f_v|_{Z_v})_{v \in V}$ is also an upper bound for the number of racks $R$ such that $\mathcal{I}(R) = I$.

Now fix a $v \in V$. Let $C \subseteq [n]$ span a component of $G_{T(R)} = G_I$ with $C \notin M_v$, and let $x \in C$; from Lemma \ref{YZsplit} (with $W = T(R)$), $(C)f_v = C$ and so in particular there are at most $|C|$ possibilities for $(x)f_v$. Now $(f_j|_{T(R)})_{j \in [n]} = (\tau_j^I)_{j \in [n]}$ and $(f_i)_{i \in T^+(R)} = (\sigma_i^I)_{i \in (T^I)^+}$ are determined by $I = \mathcal{I}(R)$, so from Lemma \ref{limitways} $f_v|_C$ is determined by $(x)f_v$. Thus there are at most $|C|$ possibilities for $f_v|_C$.

As there are $\eta_q^I/q$ components of $G_{T(R)} = G_I$ of size $q$, there are at most $N := \prod_{q=1}^n q^{\eta_q^I/q}$ possibilities for the map $f_v|_{Z_v}$. Considering all of the $c^I$ components together, there are at most $N^{c^I}$ possibilities for the maps $(f_v|_{Z_v})_{v \in V}$; it follows that there are at most $N^{c^I}$ racks $R$ such that $\mathcal{I}(R) = I$. Now  
\begin{align*}
\log_2 \left( N^{c^I} \right) &= c^I \sum_{q=1}^n \frac{\eta_q^I}{q} \log_2 q \\[3pt] 
&= \left(\sum_{p=1}^n \frac{\eta_p^I}{p} \right) \left( \sum_{q=1}^n \frac{\eta_q^I}{q} \log_2 q \right) \\[3pt]
&= \zeta^I,
\end{align*}
proving the result.
\end{proof} 
This proposition allows us to prove the main result.
\begin{proof}[Proof of Theorem \ref{mainthm}]
Recall that $\mathcal{R}'_n$ denotes the set of isomorphism classes of racks of order $n$; as $|\mathcal{R}'_n| \leqslant |\mathcal{R}_n|$ it suffices to find an upper bound on $|\mathcal{R}_n|$. Let $\epsilon > 0$; we have from Corollary \ref{frakI1} and Proposition \ref{frakI2} that for $n$ sufficiently large $|\mathcal{I}'(\mathcal{R}_n)| \leqslant 2^{\epsilon n^2/2}$ and $|\mathcal{I}''(\mathcal{R}_n)| \leqslant 2^{\epsilon n^2/2}$. Now take such a sufficiently large $n$; as $\mathcal{I}(R) = (\mathcal{I}'(R), \mathcal{I}''(R))$ for any $R \in \mathcal{R}_n$, $|\mathcal{I}(\mathcal{R}_n)| \leqslant |\mathcal{I}'(\mathcal{R}_n)||\mathcal{I}''(\mathcal{R}_n)| \leqslant 2^{\epsilon n^2}$, and so there are at most $2^{\epsilon n^2}$ possibilities for the 7-tuple $I \in \mathcal{I}(\mathcal{R}_n)$. From \mbox{Proposition \ref{zetabound}} and Lemma \ref{n2vn}, there are at most $2^{\zeta^I} \leqslant 2^{n^2/4}$ racks $R$ such that $\mathcal{I}(R) = I$, and thus $|\mathcal{R}_n| \leqslant 2^{\epsilon n^2} 2^{n^2/4}$.
\end{proof}

\section{An extremal result}

For each positive integer $n$, let $\mathcal{P}_n$ be a partition of $[n]$, and let $\mathcal{R}_{\mathcal{P}_n}$ denote the set of racks $R$ on $[n]$ such that the components of $G_R$ are exactly the parts of $\mathcal{P}_n$. Let $m_2(n)$ denote the number of parts of $\mathcal{P}_n$ of size exactly two; an extension of the methods used in this paper can be used to prove that unless $m_2(n) \sim n/2$ there exists a constant $0 < \kappa < 1/4$ such that $|\mathcal{R}_{\mathcal{P}_n}| \leqslant 2^{\kappa n^2}$ for infinitely many $n$. In other words, informally speaking, almost all (in an exponentially strong sense) racks $R$ on $[n]$ are such that almost all components of $G_R$ have size $2$. The idea of the proof is to find a function similar to $\zeta$ from Proposition~\ref{zetabound}, but taking into account the size of the components of $G_R$ rather than $G_{T(R)}$; the components of size two are a special case as the symmetric group on two elements is small and abelian. For a full proof, see \cite{thesis}.

\end{document}